\renewcommand{\section}{%
\@startsection{section}{1}%
  \z@{.7\linespacing\@plus\linespacing}{.5\linespacing}%
 {\normalfont\large\bfseries\centering}}
\newtheorem{theorem}{Theorem}[section]
\newtheorem{lemma}[theorem]{Lemma}
\newtheorem{proposition}[theorem]{Proposition}
\newtheorem*{theorem*}{Theorem} %*番号がつかない定理環境
\newtheorem*{corollary*}{Corollary}
\newtheorem*{conjecture*}{Conjecture}
\newtheorem*{lemma*}{Lemma}
\newtheorem*{proposition*}{Proposition}
\newtheorem*{problem*}{Problem}
\newtheorem*{axiom*}{Axiom}
\newtheorem*{example*}{Example}
\newtheorem*{exercise*}{Exercise}
\newtheorem*{definition*}{Definition}
\theoremstyle{definition}%remark環境 definition style
\newtheorem{remark}{Remark}[section]
\newtheorem*{remark*}{Remark}
\numberwithin{equation}{section} %数式番号 section番号付加
\renewcommand{\l}{\left}
\renewcommand{\r}{\right}
\newcommand{\eps}{\varepsilon}
\newcommand{\R}{{\mathbb R}}
\newcommand{\Z}{{\mathbb Z}}
\newcommand{\T}{{\mathbb T}}
\newcommand{\im}{{\rm Im}}
\newcommand{\re}{{\rm Re}}
\newcommand{\ds}{\displaystyle}
\newcommand{\del}{\partial}
\def\norm[#1]{\left\Vert #1 \right\Vert}
\def\tbra[#1,#2]{\left\langle #1 , #2\right\rangle} %inner product%trianguler bracket <・>
\def\rbra[#1,#2]{\left( #1 , #2 \right)} %inner product% round bracket　(・)
\def\sbra[#1,#2]{\left[ #1 , #2 \right]} %inner product% square bracket [・]
\newcommand{\dn}{\mathrm{dn}}%dnの定義
\newcommand{\sn}{\mathrm{sn}}%snの定義
\newcommand{\scG}{{\mathscr G}}
\newcommand{\cE}{{\mathcal E}}
\newcommand{\cJ}{{\mathcal J}}
\newcommand{\cM}{{\mathcal M}}
\newcommand{\cP}{{\mathcal P}}
\newcommand{\cS}{{\mathcal S}}
\begin{document}

% \title[short text for running head]{full title}
\title[Long-period limit of periodic traveling wave solutions]{Long-period limit of exact periodic traveling wave solutions for the derivative nonlinear Schr\"{o}dinger equation}

%    Only \author and \address are required; other information is
%    optional.  Remove any unused author tags.

%    author one information
% \author[short version for running head]{name for top of paper}
\author{Masayuki Hayashi}
\address{Department of Applied Physics, Waseda University, Tokyo 169-8555, Japan}
\curraddr{}
\email{masayuki-884@fuji.waseda.jp}
\thanks{}

%    \subjclass is required.
\subjclass[2010]{Primary 35Q55, 35C07; Secondary 33E05}
%35Q55  	NLS-like equations
%35C07  	Traveling wave solutions
%35B35		Stability
%35B10  	Periodic solutions
%	33E05  	Elliptic functions and integrals
\keywords{derivative nonlinear Schr\"{o}dinger equation, solitons, periodic traveling waves, long-period limit, elliptic functions and elliptic integrals}

\date{}

\dedicatory{}

%    "Communicated by" -- provide editor's name; required.
% \commby{}

\begin{abstract}
We study the periodic traveling wave solutions of the derivative nonlinear Schr\"{o}dinger equation (DNLS). It is known that DNLS has two types of solitons on the whole line; one has exponential decay and the other has algebraic decay. The latter corresponds to the soliton for the massless case. In the new global results recently obtained by Fukaya, Hayashi and Inui \cite{FHI17}, the properties of two-parameter of the solitons are essentially used in the proof, and especially the soliton for the massless case plays an important role. To investigate further properties of the solitons, we construct exact periodic traveling wave solutions which yield the solitons on the whole line including the massless case in the long-period limit. Moreover, we study the regularity of the convergence of these exact solutions in the long-period limit. Throughout the paper, the theory of elliptic functions and elliptic integrals is used in the calculation.
% We study the periodic traveling waves of the derivative nonlinear Schr\"{o}dinger equation. Explicit formulae of the periodic traveling waves are obtained by a simple quadrature method. Furthermore, we show that the long-period limit of the profiles of these traveling waves are exactly same as the well-known solitary waves on the whole line.    
%quadrature method 求積法
\end{abstract}

\maketitle

\tableofcontents
%%%%%%%%%%%%%%%%%%%%%%%%%%%
%%%%%%%%%%%%%%%%%%%%%%%%%%%
\section{Introduction}
\subsection{Background}
We begin with the following equation
\begin{equation}
%方程式 \psi で書くのあり。notationの関係で
\label{eq:1.1}
i \partial_t \psi + \partial_x^2 \psi +i  \partial_x( |\psi |^{2} \psi )=0 , 
%\quad (t,x) \in \R \times \R.
\end{equation}
which is known as a derivative nonlinear Schr\"{o}dinger equation. This equation appears in plasma physics as a model for the propagation of Alfv\'{e}n waves in magnetized plasma (see \cite{MOMT76, M76}) and it is known to be completely integrable (see \cite{KN78}). 

There is a large literature on the Cauchy problem for the equation (\ref{eq:1.1}). Tsutsumi and Fukuda \cite{TF80, TF81} studied the well-posedness in $H^s(\R)$ for $s>3/2$ by classical energy method which depends on parabolic regularization. The well-posedness in the energy space $H^1(\R)$ was first proved by Hayashi \cite{H93}. He introduced gauge transformation (see e.g. (\ref{eq:1.2}) or (\ref{eq:1.14}) below) to overcome the derivative loss. In a later work, Hayashi and Ozawa \cite{HO92} proved the solution of $H^1(\R)$ is global if the initial data $\psi_0$ satisfies $\| \psi_0\|_{L^2}^2 < 2\pi$. Recently, Wu \cite{Wu15} improved this global result, more specifically, he proved the solution is global if the initial data satisfies $\|\psi_0\|_{L^2}^2 < 4\pi$. We will discuss connection between these global results and solitons of (\ref{eq:1.1}) later. For the Cauchy problem for (\ref{eq:1.1}) in $H^s(\R )$ with $s<1$, we refer to \cite{T99, BL01, CKSTT01, CKSTT02, GW17}.

There are several forms of (\ref{eq:1.1}) that are equivalent under a gauge transformation. By using the following gauge transformation to the solution of (\ref{eq:1.1})
\begin{align}
\label{eq:1.2}
v(t,x) = \psi (t,x)\exp\l( \frac{i}{2} \int_{-\infty}^{x} |\psi (t,x)|^2 dx\r) ,
\end{align}
then $v$ satisfies the following equation:
\begin{align}
\label{DNLS}
 i \del_t v + \del_x^2 v +i |v|^{2} \del_x v =0,
\end{align}
This equation has the following conserved quantities:  
\begin{align*}
\tag{Energy}
 E(v)&:=\frac{1}{2}\| \partial_x v  \|_{L^2}^2 - \frac{1}{4} \re \int i |v|^{2}\overline{v}\partial_x v dx,
\\
\tag{Mass}
 M(v)&:= \| v\|_{L^2}^2,
\\
\tag{Momentum}
P(v)&:=\re \int i\partial_x v \overline{v} dx.
\end{align*} 
The equation (\ref{DNLS}) can be rewritten as 
\begin{align}
\label{eq:1.4}
i \partial_t v = E ' (v).
\end{align}
The Hamiltonian form (\ref{eq:1.4}) is useful when one considers problems of orbital stability/instability of solitons. It is known that (\ref{DNLS}) has a two-parameter family of solitons (see \cite{KN78, CO06, G08, L09})
\begin{align}
\label{eq:1.5}
v_{\omega,c}(t,x)=e^{i\omega t} \phi_{\omega,c}(x-ct),
\end{align}
where $(\omega,c)$ satisfies $\omega > c^2/4$, or $\omega = c^2/4$ and $c>0$, and
\begin{align} 
\label{eq:1.6}
\phi_{\omega,c}(x) 
&= \Phi_{\omega,c}(x) \exp\l( i\frac{c}{2}x - \frac{i}{4} \int_{-\infty}^{x} \Phi_{\omega,c}(y)^{2} dy \r),
\\
\label{eq:1.7}
\Phi_{\omega,c}^2(x) &=
\l\{
\begin{array}{ll}\ds
 \frac{4\omega - c^2}{\sqrt{\omega} \l( \cosh (\sqrt{4\omega- c^2}x)-\frac{c}{2\sqrt{\omega}} \r) } 
& \ds \text{if } \omega > \frac{c^2}{4},
\\
 & 
\\ \ds
\frac{4 c}{(cx)^2+1} & \ds \text{if } \omega =\frac{c^2}{4} \text{ and } c>0.
\end{array}
\r.
\end{align}
% Here, $\Phi_{\omega,c}$ is the positive radial (even) solution of 
% \begin{align}
% %label{eq:1.6}%
% - \Phi ''+ \l(\omega- \frac{c^2}{4}\r) \Phi +\frac{c}{2} |\Phi|^{2} \Phi - \frac{3}{16} |\Phi|^{4}\Phi =0, \quad x \in \R, 
% \end{align}
% and the complex-valued function $\phi_{\omega,c}$ satisfies
% \begin{align}
% \label{eq:1.9}
% -\phi ''+ \omega \phi +ic \phi ' - i|\phi|^{2} \phi '=0, \quad x \in \R.
% \end{align}
We note that $\Phi_{\omega,c}$ is the positive radial (even) solution of 
\begin{align}
\label{eq:1.8}
- \Phi ''+ \l(\omega- \frac{c^2}{4}\r) \Phi +\frac{c}{2} |\Phi|^{2} \Phi - \frac{3}{16} |\Phi|^{4}\Phi =0,
\end{align}
and the complex-valued function $\phi_{\omega,c}$ is the solution of 
\begin{align}
\label{eq:1.9}
-\phi ''+ \omega \phi +ic \phi ' - i|\phi|^{2} \phi '=0. 
\end{align}
The equation (\ref{eq:1.9}) can be rewritten as $S_{\omega ,c}' (\phi) =0$, where
\begin{align*}
S_{\omega ,c}(\phi ):=E(\phi )+\frac{\omega}{2}M(\phi )+\frac{c}{2} P(\phi ).
\end{align*}
The condition of two parameters $(\omega,c)$
\begin{equation} 
\label{WC}
\omega > c^2/4, \text{ or } \omega = c^2/4 \text{ and } c>0
\end{equation}
is a necessary and sufficient condition for the existence of non-trivial solutions of (\ref{eq:1.8}) vanishing at infinity (see Appendix A in \cite{FHI17} or \cite{BeL83}). 
Guo and Wu \cite{GW95} proved that the soliton $u_{\omega,c}$ is orbitally stable when $\omega>c^2/4$ and $c<0$ by applying the abstract theory of Grillakis, Shatah, and Strauss \cite{GSS87, GSS90}.
% and the spectral analysis of the linearized operators. 
Colin and Ohta \cite{CO06} proved that the soliton $u_{\omega,c}$ is orbitally stable when $\omega>c^2/4$ by applying variational characterization to solitons as in Shatah \cite{S83}. The case of $\omega=c^2/4$ and $c>0$ (massless case) is treated\footnote{The ``orbital stability" discussed in \cite{KW18} is different from usual definition. Their result does not contradict that finite time blow-up occurs to the initial data near the soliton for the massless case.} by Kwon and Wu \cite{KW18}, while the orbital stability or instability for the massless case is still an open problem.

From the explicit formulae (\ref{eq:1.6}) and (\ref{eq:1.7}) of solitons, we have
\begin{align}
\label{eq:1.11}
M(\phi_{\omega ,c})=M(\Phi_{\omega ,c}) = 8\tan^{-1} \sqrt{ \frac{2\sqrt{\omega}+c}{2\sqrt{\omega}-c} },
\end{align}
where $(\omega ,c)$ satisfies (\ref{WC}) (see \cite[Lemma 5]{CO06} for the proof). If we consider the curve
\begin{align}
\label{eq:1.12}
c=2s\sqrt{\omega}
\end{align}
for $\omega >0$ and $s\in (-1,1]$, we have
\begin{align*}
\Phi_{\omega ,2s\sqrt{\omega}}(x) =\omega^{\frac{1}{4}} \Phi_{1,2s} (\sqrt{\omega}x).
\end{align*}
This means that the curve (\ref{eq:1.12}) corresponds to the scaling which is invariant of the mass of the soliton. We note that the function
\begin{align}
\label{eq:1.13}
s\mapsto M(\phi_{1 ,2s} ) =8\tan^{-1} \sqrt{ \frac{1+s}{1-s} }
\end{align}
is a strictly increasing function from $(-1,1]$ to $(0,4\pi ]$. Especially, the threshold value $4\pi$
%for the mass in Wu's result \cite{Wu15} 
corresponds to the mass of the soliton for the massless case. 
%%%%%%%%%%%%%%%
%%%%%%%%%%%%%%%

Here, let us review the global results in the energy space $H^1(\R)$. We consider another gauge equivalent form of (\ref{eq:1.1}). By using the following gauge transformation to the solution of (\ref{DNLS})
\begin{align}
\label{eq:1.14}
u(t,x) = v(t,x)\exp\l( \frac{i}{4} \int_{-\infty}^{x} |v(t,x)|^2 dx\r), 
\end{align}
then $u$ satisfies the following equation:
\begin{align}
\label{eq:1.15}
i \partial_t u + \partial_x^2 u +\frac{i}{2} |u|^{2} \del_x u -\frac{i}{2}u^2\del_x \overline{u} +\frac{3}{16}|u|^4u =0 .
\end{align}
Conserved quantities of (\ref{DNLS}) are transformed as follows;
%The energy and the momentum are transformed as follows.
\begin{align}
\label{eq:1.16}
&{\cE}(u)= \frac{1}{2}\| \partial_x u\|_{L^2}^2  -\frac{1}{32} \| u\|_{L^6}^6 ,\\
\label{eq:1.17}
&{\cM}(u) =\| u\|_{L^2}^2, \\  
\label{eq:1.18}
&{\cP}(u)= \re \int i\partial_x u \overline{u} dx +\frac{1}{4}\| u\|_{L^4}^4 .
\end{align}
% The transformed energy does not contain the interaction term with derivative. 
The gauge transformation (\ref{eq:1.14}) was used in \cite{HO92} to cancel out the interaction term with derivative in the energy functional. Hayashi and Ozawa \cite{HO92} applied the following sharp Gagliardo--Nirenberg inequality
\begin{align}
\label{GN1}
\| f\|_{L^6}^6 \leq \frac{4}{\pi^2}\| f\|_{L^2}^4\| \partial_{x}f\|_{L^2}^2
\end{align}
in order to obtain a priori estimate in $\dot{H}^1(\R )$ by using conservation laws of the mass and the energy. 
They proved the $H^1(\R)$-solution of (\ref{eq:1.15}) is global if the initial data $u_0$ satisfies 
\begin{align}
\label{eq:1.20}
 \| u_0 \|_{L^2}^2 < \| Q \|_{L^2}^2=2\pi ,
\end{align}
where $Q$ is defined by $Q:= \Phi_{1,0}$. We note that $Q$ is an optimal function for the inequality (\ref{GN1}).
This result is closely related to the earlier work by Weinstein \cite{W82} for focusing $L^2$-critical nonlinear Schr\"{o}dinger equations. Consider the following quintic nonlinear Schr\"{o}dinger equation:
\begin{equation}
\label{NLS}
% \tag{NLS}
i \partial_t u +\partial_x^2 u + \frac{3}{16}|u|^{4}u=0.
\end{equation}
The equation (\ref{NLS}) has the same energy $\cE (u)$ of (\ref{eq:1.16}) and the same standing wave $e^{it}Q$ as the equation (\ref{eq:1.15}). Furthermore, (\ref{eq:1.15}) and (\ref{NLS}) are $L^2$-critical in the sense that the equation and $L^2$-norm are invariant under the scaling transformation
\begin{align}
\label{eq:1.22}
u_{\gamma}(t,x):=\gamma^{\frac{1}{4}} u(\gamma t,\gamma^{\frac{1}{2}} x), \quad \gamma>0. 
\end{align}
Weinstein \cite{W82} proved that if the initial data of (\ref{NLS}) satisfies the mass condition (\ref{eq:1.20}),
then the $H^1(\R)$-solution is global. In the case (\ref{NLS}), it is known that this mass condition is sharp, in the sense that for any $\rho \geq 2\pi$, there exists $u_0 \in H^1(\R)$ such that $\| u_0\|_{L^2}^2 =\rho$ and such that corresponding solution $u$ to (\ref{NLS}) blows up in finite time. From this analogy, Hayashi and Ozawa \cite{HO92} conjectured that the mass condition (\ref{eq:1.20}) is also sharp for the equation (\ref{eq:1.15}) (equivalently (\ref{eq:1.1}) or (\ref{DNLS})). 

A similar analogy can be seen for the quintic generalized Korteweg-de Vries equation:
\begin{align}
\label{eq:1.23}
% \tag{gKdV}
\del_t u+\del^3_x u+\frac{3}{16}\del_x (u^5) =0.
\end{align}
This equation is also the $L^2$-critical equation which has the same energy $\cE (u)$ as (\ref{eq:1.15}) and the traveling wave solution $Q(x-t)$. Hence, if the initial data of (\ref{eq:1.23}) satisfies the mass condition (\ref{eq:1.20}), then the $H^1(\R )$-solution is global. It is also known that the solution of (\ref{eq:1.23}) blows up in finite time to the initial data satisfying
\begin{align*}
\cE (u_0) <0,~\cM (Q) <\cM (u_0) <\cM (Q)+\eps
\end{align*} 
for small $\eps >0$ and some decay condition; see \cite{M01, MM02}.

However, the mass condition (\ref{eq:1.20}) is not sharp to the equation (\ref{eq:1.15}). Wu \cite{Wu13, Wu15} took advantage of conservation law of the momentum as well as conservation laws of the mass and the energy. He used the following sharp Gagliardo--Nirenberg inequality
\begin{align}
\label{GN2}
\| f\|_{L^6}^6 \leq 3(2\pi )^{-\frac{2}{3}}\| f\|_{L^4}^{\frac{16}{3}}\| \partial_{x}f\|_{L^2}^{\frac{2}{3}}
\end{align}
in his argument to connect the estimates obtained from the energy (\ref{eq:1.16}) and the momentum (\ref{eq:1.18}) (see also \cite{GW17}).
Then, he proved that the $H^1(\R )$-solution of (\ref{eq:1.15}) is global if the initial data $u_0$ satisfies
\begin{align}
\label{eq:1.25}
\| u_0 \|_{L^2}^2 < \| W\|_{L^2}^2=4\pi ,
\end{align}
where $W$ is defined by $W:= \Phi_{1,2}$. We note that $W$ is an optimal function for the inequality (\ref{GN2}). 

%このようなglobal result の違いが出る理由の一つはDNLSがtwo-parameter solitonsを持つからであろう
One of the reason why the difference of global results as described above occurs is due to that the equation (\ref{eq:1.15}) has a two-parameter family of solitons. The massless case corresponds to the threshold for the existence of solitons, and the value $4\pi$ corresponds to the mass of the soliton for the massless case. Hence, it is reasonable to conjecture that $4\pi$ is an optimal upper bound of the mass for the global existence of $H^1(\R)$-solutions by the analogy with (\ref{NLS}) and (\ref{eq:1.23}) as $L^2$-critical equations. However, existence of blow-up solutions for the derivative nonlinear Schr\"{o}dinger equation is a large open problem. It is known that finite time blow-up occurs for the equation (\ref{eq:1.1}) on a bounded interval or on the half line, with Dirichlet boundary condition (see \cite{Tan04, Wu13}), but unfortunately one can not apply these proofs to the whole line case. We also refer to \cite{LSS13, CSS16} for numerical approaches to this problem. 

%FHIの結果
Recently, Fukaya, Hayashi and Inui \cite{FHI17} gave a sufficient condition for global existence by using potential well theory; if the initial data $u_0$ of (\ref{DNLS}) satisfying the following condition that there exists $(\omega ,c)$ satisfying (\ref{WC}) such that
\begin{align}
\label{eq:1.26}
S_{\omega ,c} (u_0) \leq S_{\omega ,c} (\phi_{\omega ,c}) ,~\tbra[ S_{\omega ,c}' (u_0 ), u_0] \geq 0 ,
\end{align}
then the corresponding $H^1(\R)$-solution exists globally in time. For the case $\omega >c^2/4$, this global result is essentially proved in \cite{CO06}. 
In \cite{FHI17}, they mainly proved that there exists $(\omega ,c)$ satisfying both (\ref{WC}) and the condition (\ref{eq:1.26}) 
if the initial data $u_0$ satisfies $\| u_0\|_{L^2}^2 <4\pi$, or $\| u_0\|_{L^2}^2=4\pi$ and $P(u_0)<0$.
This gives a simple alternative proof of Wu's global result.
We note that the latter global result; the global result for the initial data such that
\begin{align*}
\| u_0\|_{L^2}^2=4\pi ~\text{and}~P(u_0) <0
\end{align*}
is first discovered by \cite{FHI17}, and this gives the first progress to investigate the dynamics around the soliton for the massless case. Furthermore, they proved that the condition (\ref{eq:1.26}) contains the initial data in $H^1 (\R )$ with arbitrarily large mass (see Corollary 1.5 in \cite{FHI17}). We note that their proofs are done by essentially using the properties of two-parameter of the solitons, and especially the soliton for the massless case plays an important role in the proof. 

%%%%%%%%%%%%%%%%%%%%%%%%%%
%%%inverse scattaring
Recently, in \cite{JLPS18} it was proved by inverse scattering approach (see also \cite{LPS16, PSS17, PS17} for related works) that the equation \eqref{DNLS} is globally well-posed for any initial data belonging to weighted Sobolev space $H^{2,2} (\R )$, where
\begin{align*}
H^{2,2} (\R ) := \l\{ u\in H^2 (\R )~;~\braket{\cdot}^2u\in L^2(\R ) \r\}.
\end{align*}
However, the dynamics in the energy space $H^1(\R)$ (especially above the mass threshold $4\pi$) is still unclear. We note that the solitons for the massless case do not contain in $H^{2,2}(\R)$, but they contain in $H^1(\R)$. Therefore, the difference of functional spaces is quite important for (\ref{DNLS}) from the viewpoint of solitons. We also note that the results in \cite{JLPS18} do not imply the nonexistence of blow-up solutions for \eqref{DNLS} in the energy space  $H^1(\R)$; see blow-up criteria in \cite{KW18}.
%PSS weighted sobolev space $H^2\cap H^{1,1}$ but このspaceにもmassless case soliton は含まれない。
%必要に感じたら、このことも注釈に。

%%%%%%%%%%%%%%
%%%%%%%%%%%%%
%periodic DNLS
%%%%%%%%%%%%%%
The equation (\ref{eq:1.1}) in the periodic setting is also an important problem. Tsutsumi and Fukuda \cite{TF80} proved well-posedness in $H^s(\T )$ for $s>3/2$ in the same way as the whole line case, where $\T :=\R/2\pi\Z$. To prove well-posedness in $H^1(\T )$ one can not directly apply the proof in \cite{H93} to the periodic setting since the $L^4$ Strichartz estimate on a torus holds with a loss of $\eps >0$ derivatives (see \cite{Bo93}). Herr \cite{Hr06} proved local well-posedness in $H^s (\T)$ for $s\geq 1/2$ by using periodic gauge transformation and multilinear estimates in Fourier restriction norm spaces (see also \cite{GH08}). In \cite{MO15}, by adapting Wu's proof to the periodic setting they proved that the $H^1(\T)$ solution of (\ref{eq:1.1}) is global if the mass is less than $4\pi$. For global results in $H^s(\T )$ with $s<1$, we refer to \cite{Win10, M17}.  

The periodic traveling waves of the derivative nonlinear Schr\"{o}dinger equation have only been partially studied. 
% Grecu, Grecu, Visinescu, Fedele and De Nicola \cite{G08} studied the exact traveling waves for the equation in divergence form. 
As a first mathematical work of this problem, Imamura \cite{I10} studied semi-trivial solutions:
\begin{align*}
\phi_{\ell}^c (x-ct) =\sqrt{c-\ell}e^{i\ell (x-ct)}, \quad \ell \in\Z \setminus\{ 0\}, ~c>\ell ,
\end{align*}
which are $2\pi$-periodic traveling wave solutions of (\ref{DNLS}). In \cite{I10}, he proved orbital stability of semi-trivial solutions by applying the abstract theory of Grillakis, Shatah, and Strauss \cite{GSS87, GSS90}.
% orbital stability of semi-trivial solutions and the related bifurcations
Murai, Sakamoto and Yotsutani \cite{MSY15} discussed the explicit formulae of periodic traveling waves of (\ref{DNLS}) which are not semi-trivial. If we put the form
\begin{align*}
v(t,x)=e^{i\omega t} U(x-ct)
\end{align*}
into (\ref{DNLS}), then $U$ satisfies the equation
\begin{align}
\label{eq:1.27}
-U'' +\omega U +icU' -i|U|^2U' =0, 
\end{align}
with periodic boundary conditions. By using polar coordinates $U(x)=r(x)e^{i\theta (x)}$, a direct calculation shows that the functions $r(x)$ and $\theta(x)$ satisfy 
\begin{align}
\label{eq:1.28}
&-r ''+\l( \omega -\frac{c^2}{4} +\frac{b}{2}\r) r +\frac{c}{2}r^3 -\frac{3}{16} r^5 +\frac{b^2}{r^3}=0, \\
\label{eq:1.29}
&\theta (x) = \frac{c}{2}x -\frac{1}{4}\int_{0}^{x} r(y)^2 dy+b\int_{0}^{x} \frac{dy}{r(y)^2},
\end{align}
where $b$ is some constant which comes from integration. If we consider solutions vanishing at infinity, we can take $b=0$. In this case (\ref{eq:1.28}) corresponds to the equation (\ref{eq:1.8}), and (\ref{eq:1.29}) corresponds to the gauge transformation (\ref{eq:1.6}). However, in general $b$ is a non-zero constant in the periodic setting. In \cite{MSY15}, they first obtain explicit formulae of all the $2\pi$-periodic solutions of (\ref{eq:1.28}), and then try to find the solutions from among them which satisfy periodic conditions of $\theta$:
\begin{align*}
\theta (0) =0, ~\theta (2\pi ) =2\pi \ell ,
\end{align*}
which is equivalent that
\begin{align}
\label{eq:1.30}
2\pi\ell = c\pi -\frac{1}{4} \int_{0}^{2\pi} r(x)^2 dx +b\int_{0}^{2\pi} \frac{dx}{r(x)^2}, 
\end{align}
where $\ell \in\Z\setminus \{ 0\}$ is a winding number. Since general solutions of (\ref{eq:1.28}) are complicate as can be seen in \cite{MSY15}, it is a quite delicate problem to find the solutions which satisfy the condition (\ref{eq:1.30}). In \cite{MSY15}, partial numerical computations are done to confirm the existence of solutions which satisfy special periodic boundary conditions above.  

The main difficulty to obtain exact periodic traveling wave solutions of (\ref{DNLS}) is that the nonlocal problem as (\ref{eq:1.30}) appears. 
%テクニカルな複雑な計算を避けるために、変換された方程式をもとに考える。
%to avoid technical complex calculation in nonlocal problem
%to avoid nonlocal issues
In this paper, to avoid complex calculation in nonlocal issues we consider the equation (\ref{eq:1.15}) on a torus; i.e.,
\begin{align}
\label{DNLS1}
i \partial_t u + \partial_x^2 u +\frac{i}{2} |u|^{2} \partial_x u -\frac{i}{2}u^2\partial_x \overline{u} +\frac{3}{16}|u|^4u =0,
\quad (t,x)\in\R\times\T_{2L},
\end{align}
where $\T_{2L}=\R /2L\Z \simeq [-L ,L]$ is the torus of size $2L$. The energy, mass and momentum of (\ref{DNLS1}) are given by (\ref{eq:1.16}), (\ref{eq:1.17}) and (\ref{eq:1.18}) respectively, in the periodic setting. Our aim of this paper is to find exact periodic traveling wave solutions which yield the solitons on the whole line including the massless case in the long-period limit.
% not to determine the complete structure of periodic traveling wave solutions. 
We also study the regularity of the convergence of exact periodic traveling wave solutions in the long-period limit. 
%%%%%%%%%%%%%%%%
%%%%%%%%%%%%%%%%%
 
In the end of this subsection, we discuss the relation between the equations (\ref{eq:1.1}), (\ref{DNLS}) and (\ref{eq:1.15}) on $\T_{2L}$. Let us recall the periodic gauge transformation introduced by Herr \cite{Hr06}. For $a\in\R$, let $\scG_{a}: L^2(\T_{2L} ) \to L^2(\T_{2L})$ be defined by
\begin{align}
\label{eq:1.32}
\scG_a (f) (x) =e^{ia\cJ (f)(x)} f(x),
\end{align} 
where $\cJ (f)$ 
\begin{align*}
\cJ (f) (x) :=\frac{1}{2L} \int_{0}^{2L}\int_{\theta}^{x} \l( |f(y)|^2 -\mu [f] \r) dyd\theta
\end{align*}
and
\begin{align*}
\mu =\mu [f] :=\frac{1}{2L} \| f\|_{L^2 (\T_{2L})}^2.
\end{align*}
We note that $\cJ (f)$ is the $2L$-periodic primitive of $|f|^2-\mu (f)$ with mean zero. For the solution $u$ of (\ref{DNLS}) on $\T_{2L}$, we define the gauge transformed solution by
\begin{align*}
u(t,x) := \scG_{a}(v) (t, x+2a\mu t).
\end{align*}
A straightforward calculation shows that $u$ satisfies
\begin{align}
\label{eq:1.33}
i\del_t u +\del_x^2 u +(1-2a)i|u|^2\del_x u-2iau^2\del_x\overline{u} +a\l( a+\frac{1}{2}\r) |u|^4u +e_L (u)=0,
% \psi (v)v-a\mu |v|^2v =0.
\end{align}
where
\begin{align}
\label{eq:1.34}
e_L (u) &:= \psi (u)u -a\mu |u|^2u, \\
\label{eq:1.35}
\psi (u)&:= \frac{a}{2L} \int_{0}^{2L} \l( 2\im (\overline{u}\del_x u) (t, \theta ) +\l( \frac{1}{2} -2a \r) |u|^4 (t ,\theta ) \r) d\theta +a^2\mu^2 .
\end{align}
We note that when $a=-\frac{1}{2}$ [resp. $a=\frac{1}{4}$] the equation (\ref{eq:1.33}) represents (\ref{eq:1.1}) [resp. (\ref{eq:1.15})] on $\T_{2L}$ with some error term $e_L(u)$. Therefore, three equations (\ref{eq:1.1}), (\ref{DNLS}) and (\ref{eq:1.15}) on $\T_{2L}$ can be considered to be 
{\it almost equivalent} under the suitable periodic gauge transformation. Since $e_L$ formally goes to $0$ as $L\to\infty$, it is reasonable to consider that these three equations on $\T_{2L}$ do not have essentially different structure at least when $L$ is sufficiently large. This is compatible with that these three equations on the whole line are gauge equivalent. 
As can be seen in the proof in \cite{Hr06}, 
the error term $e_L(u)$ does not give any difficulty to prove well-posedness. However, it gives a delicate problem when one tries to obtain exact periodic traveling wave solutions, since the error term $e_L(u)$ is nonlocal. Hence, we consider the equation (\ref{DNLS1}) as a basic equation.%

%表現メモ
% Since two equations are equivalent on the whole line, the difference of equations do not matter essentially to achieve the goal to find exact solutions which yield the solitons on the whole line in the long-period limit.

%%%%%%%%%%%%%
\subsection{Main results}
First, we note that 
\begin{align}
\label{eq:1.36}
u_{\omega,c}(t,x)=e^{i\omega t} \varphi_{\omega,c}(x-ct), 
\end{align}
is a two-parameter family of solitons of the equation (\ref{eq:1.15}) on the whole line, where
\begin{align} 
\label{eq:1.37}
\varphi_{\omega,c}(x) 
% &= \exp\l( i\frac{c}{2}x \r)\Phi_{\omega,c}(x) ,
&=e^ {i\frac{c}{2}x}\Phi_{\omega,c}(x) ,
\end{align}
and $\Phi_{\omega ,c}$ is defined by (\ref{eq:1.7}).
Note that $\varphi_{\omega ,c}$ satisfies
\begin{align}
\label{eq:1.38}
-\varphi ''+ \omega \varphi +ic \varphi ' +\frac{c}{2}|\varphi |^2\varphi -\frac{3}{16}|\varphi |^4\varphi =0.
\end{align}
We consider the elliptic equation (\ref{eq:1.8}) on a torus: 
\begin{align}
\label{ELL}
- \Phi ''+ \l(\omega- \frac{c^2}{4}\r) \Phi +\frac{c}{2} |\Phi|^{2} \Phi - \frac{3}{16} |\Phi|^{4}\Phi =0,\quad x \in \T_{2L}.
\end{align}
% スカラータイプの解、(\ref{ELL})と(\ref{eq:1.38})の関係性。(\ref{eq:1.38})はトーラス上のtraveling waves の式に相当することの簡単な説明
To find exact solutions which yield the solitons in the long-period limit, we need to find positive single-bump solutions of (\ref{ELL}). We have the following theorem.
\begin{theorem}
\label{thm:1.1}
Let $(\omega ,c) \in \R^2$ satisfy \textup{(\ref{WC})}. Assume that $L>0$ satisfies 
\begin{align}
\label{eq:1.40}
L_0 =L_0 (\omega ,c) <L <\infty ,
\end{align}
where $L_0(\omega ,c)$ is a positive constant determined by $(\omega ,c)$ \textup{(}see Remark \textup{\ref{rem:1.2}} below\textup{)}. Then, there exists the positive single-bump solution $\Phi_{\omega ,c}^L$ of \textup{(\ref{ELL})} on $\T_{2L}$ such that  $\Phi_{\omega ,c}^L(x) \to \Phi_{\omega , c}(x)$  for any $x \in \R$ as $L\to \infty$. Furthermore, $\Phi_{\omega,c}^L$ is explicitly represented as
\begin{align}
\label{eq:1.41}
\l( \Phi_{\omega , c}^L (x) \r)^2 = \eta_3 \frac{\mathrm{dn}^2\l( \frac{x}{2g} ; k\r)}{1+\beta^2 \mathrm{sn}^2 \l( \frac{x}{2g} ;k\r)} ~,
\quad x \in [-L,L]
\end{align}
with parameters $\eta_3, g, k , \beta$ depending on $(L, \omega , c)$. 
\end{theorem}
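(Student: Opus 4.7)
\medskip
The plan is to reduce \eqref{ELL} to a first-order ODE for $\rho := \Phi^2$, classify the roots of the resulting quartic, then recognize the quadrature as the Jacobi-elliptic expression \eqref{eq:1.41} and close the system by imposing the period $2L$.

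First I would multiply \eqref{ELL} by $\Phi'$ and integrate to produce the first integral
\begin{equation*}
(\Phi')^2 \;=\; \l(\omega - \tfrac{c^2}{4}\r)\Phi^2 + \tfrac{c}{4}\Phi^4 - \tfrac{1}{16}\Phi^6 + C,
\end{equation*}
with integration constant $C\in\R$. Setting $\rho = \Phi^2$ gives $(\rho')^2 = \tfrac14\rho\,Q(\rho)$, where
\begin{equation*}
Q(\rho):=-\rho^3 + 4c\rho^2 + 4(4\omega-c^2)\rho + 16C.
\end{equation*}
The whole-line soliton corresponds to $C=0$. For $|C|$ small (and of the appropriate sign in each of the two cases of \eqref{WC}; a small expansion near the double root shows that the massless case requires $C<0$), continuity ensures $Q$ has three real roots $\eta_1(C)<\eta_2(C)<\eta_3(C)$, with $\eta_1<0<\eta_2$ and $\eta_2\downarrow 0$ as $C\to 0$. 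Then
\begin{equation*}
(\rho')^2 \;=\; \tfrac14\rho\,(\rho-\eta_1)(\rho-\eta_2)(\eta_3-\rho)
\end{equation*}
is non-negative exactly for $\rho\in[\eta_2,\eta_3]$, and Vieta bijectively links $(\eta_1,\eta_2,\eta_3)$ and $(\omega,c,C)$.

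Next I would substitute the ansatz $\rho(x) = \eta_3\,\dn^2(x/(2g);k)/(1+\beta^2\sn^2(x/(2g);k))$ into the ODE. Using $\dn^2 = 1-k^2\sn^2$, differentiation yields
\begin{equation*}
\rho' \;=\; -\,\frac{\eta_3(k^2+\beta^2)\,\sn\cn\dn}{g\,(1+\beta^2\sn^2)^2},
\end{equation*}
and equating $(\rho')^2$ with the factored quartic reduces, after cancellation of the common factors $\sn^2\cn^2\dn^2/(1+\beta^2\sn^2)^4$, to algebraic identities solved by
\begin{equation*}
\beta^2 = \frac{\eta_3-\eta_2}{\eta_2-\eta_1},\qquad k^2 = \frac{\eta_1(\eta_2-\eta_3)}{\eta_3(\eta_2-\eta_1)},\qquad g^2 = \frac{4\beta^2(1+\beta^2)}{\eta_3^2(k^2+\beta^2)}.
\end{equation*}
The signs $\eta_1<0<\eta_2<\eta_3$ guarantee $\beta^2>0$ and $k^2\in(0,1)$, so \eqref{eq:1.41} is a genuine positive solution of \eqref{ELL} for each admissible triple of roots.

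Finally, \eqref{eq:1.41} has fundamental $x$-period $4gK(k)$, with a single maximum at $x=0$ and a single minimum at $x=\pm L$ exactly when $L=2gK(k)$; this period condition, combined with Vieta and the formulas above, selects $C$ as a function of $L$. Setting $L_0(\omega,c):=\inf 2gK(k)$ over admissible $C$ gives the threshold in \eqref{eq:1.40}: as $C\to 0$ one has $\eta_2\downarrow 0$, $k\uparrow 1$, $K(k)\uparrow\infty$, hence $L\uparrow\infty$, so every $L>L_0$ is attained. For the long-period limit one uses $\dn(\cdot;1)=\mathrm{sech}$ and $\sn(\cdot;1)=\tanh$ together with the limits $\eta_3\to 2c+4\sqrt{\omega}$, $\beta^2\to(2\sqrt{\omega}+c)/(2\sqrt{\omega}-c)$, and the elementary hyperbolic identity $\cosh^2+\beta^2\sinh^2=\tfrac12(1+\beta^2)\cosh(2\cdot)+\tfrac12(1-\beta^2)$ to recover \eqref{eq:1.7} pointwise (the massless case $c=2\sqrt{\omega}$ requires a parallel rescaling in which $\beta\to\infty$ and $g\to\infty$ combine to produce the algebraic soliton). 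The main obstacle is the algebraic identification step: the rational-$\sn^2$ form \eqref{eq:1.41} is not a standard Byrd--Friedman entry, so one must verify that the ansatz is compatible with \emph{every} admissible root-triple and does not impose a hidden constraint; once this is settled, the period analysis and the $L\to\infty$ limit are analytically routine.
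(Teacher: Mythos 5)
Your strategy is the paper's: pass to $\psi=\Phi^2$, obtain $[\psi']^2=\tfrac14\psi(\psi-\eta_1)(\psi-\eta_2)(\eta_3-\psi)$ with $\eta_1<0<\eta_2<\eta_3$, identify the quadrature with the $\mathrm{dn}^2/(1+\beta^2\mathrm{sn}^2)$ profile, impose $2L=4gK(k)$, and send $\eta_2\to 0$. Your formulas for $k^2$, $\beta^2$ and $g$ all agree with the paper's (which obtains \eqref{eq:3.5}--\eqref{eq:3.8} by integrating directly via formula 257.00 of Byrd--Friedman rather than by verifying an ansatz; that route also disposes of the ``hidden constraint'' worry you leave open in your last paragraph, since the quadrature is an equivalence, not a guess).

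There are, however, two concrete gaps, both at exactly the points the paper flags as delicate. First, your mechanism for $L\uparrow\infty$ --- ``$\eta_2\downarrow 0$, $k\uparrow 1$, $K(k)\uparrow\infty$'' --- is false in the massless case $\omega=c^2/4$, $c>0$: there $k\to 1/\sqrt{2}$ (Lemma \ref{lem:3.6}), $K(k)$ stays bounded, and the period diverges instead because $\eta_1,\eta_2\to 0$ forces $g\to\infty$ in \eqref{eq:3.9}. You acknowledge the massless case needs a separate rescaling for the pointwise limit, but your period analysis silently assumes the $\omega>c^2/4$ asymptotics. Second, defining $L_0:=\inf 2gK(k)$ and asserting ``every $L>L_0$ is attained'' gives existence by the intermediate value theorem, but it does not identify $L_0$ with the explicit constant of Remark \ref{rem:1.2}, does not give uniqueness of the single-bump solution for each $L$, and does not by itself yield the equivalence $L\to\infty\iff\eta_3\to\alpha_1$ on which the pointwise convergence rests. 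The paper closes all three at once by proving strict monotonicity of $\eta_3\mapsto k$ and $\eta_3\mapsto T_\psi$ on $(\alpha_0,\alpha_1)$ (Propositions \ref{prop:3.5} and \ref{prop:3.7}, Appendices \ref{sec:A} and \ref{sec:B}), which is where most of the genuine work in the two-parameter setting lies; your proposal replaces that work with an assertion.
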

%%%%
\begin{remark}
\label{rem:1.1}
The value $\eta_3$ corresponds to the maximal value of $\l( \Phi_{\omega ,c}^L \r)^2$. We note that $\eta_3$ satisfies
\begin{align*}
\alpha_0 <\eta_3 <\Phi^2_{\omega ,c} (0),
\end{align*}
where $\alpha_0$ is defined by
\begin{align*}
\alpha_0 := \frac{1}{3} \l( 4c+\sqrt{48\omega +4c^2}\r) .
\end{align*}
It is shown that $\alpha_0$ is a positive constant when $(\omega ,c)$ satisfies (\ref{WC}) (see Lemma \ref{lem:3.1}).
\end{remark}
%%%%
\begin{remark}
\label{rem:1.2}
$L_0$ is explicitly represented as
\begin{align*}
L_0 =L_0(\omega ,c):=\frac{2\pi}{ \sqrt{\alpha_0 \sqrt{A(\alpha_0)}} },
\end{align*}
where $A(x)$ is defined by
\begin{align*}
A(x) :=-3x^2 +8cx+64\omega .
\end{align*}
We note that $A(\alpha_0 )$ is a positive constant when $(\omega ,c)$ satisfies (\ref{WC}) (see Remark \ref{rem:3.1}). The condition (\ref{eq:1.40}) is optimal in the sense that when $L=L_0$, the constant $\sqrt{\alpha_0}$ is a solution of (\ref{ELL}) and $\Phi_{\omega ,c}^L (x)\to \sqrt{\alpha}$ for any $x\in [-L_0 ,L_0]$ as $L\downarrow L_0$. In short, the condition (\ref{eq:1.40}) is optimal in order that $\Phi_{\omega ,c}^L$ has a single bump.
\end{remark}
The functions $\mathrm{dn}$ (dnoidal) and $\mathrm{cn}$ (cnoidal) in Theorem \ref{thm:1.1} are usual Jacobi's elliptic functions; see Section \ref{sec:2} for a precise definition. We note that if we take $c_L \in \frac{2\pi}{L}\Z$,  
% a two-parameter family of periodic traveling waves defined by
exact periodic traveling waves defined by
\begin{align}
\label{eq:1.42}
u_{\omega, c_L}^L(t,x)=e^{i\omega t+ i\frac{c_L}{2}(x-c_Lt)} \Phi_{\omega,c_L}^L(x-c_L t)
=:e^{i\omega t} \varphi_{\omega ,c_L}^L(x-c_Lt)
% ~c_L \in \frac{2\pi}{L}\Z
\end{align}
satisfy the equation (\ref{DNLS1}) on $\T_{2L}$. If for each $L>L_0$ we take $c_L \in \frac{2\pi}{L}\Z$ such that $c_L\to c$ as $L\to\infty$, we have
\begin{align}
\label{eq:1.43}
\varphi_{\omega ,c_L}^L (x) \to \varphi_{\omega ,c} (x)
\end{align}
for any $x\in\R$ as $L\to\infty$. This gives the pointwise convergence of periodic traveling waves in the long-period limit.

In the one-parameter case ($\omega >0$ and $c=0$), exact solutions defined by 
(\ref{eq:1.41}) correspond to periodic wave solutions to (\ref{NLS}) and (\ref{eq:1.23}) which were studied in \cite{AN09}. Construction of solutions in Theorem \ref{thm:1.1} is done by a simple quadrature method in the similar way as the one-parameter case. However, derivation of the detailed properties of exact solutions in the two-parameter case is far from being obvious from the result of one-parameter case. For instance, we can show that the modulus of elliptic functions in (\ref{eq:1.41}) has the following long-period limit:
\begin{align}
\label{eq:1.44}
&k \to
\l\{
\begin{array}{ll}
\ds 1 &  \text{if}~ \omega >c^2/4,\\[3pt]
\ds \frac{1}{\sqrt{2}} & \text{if}~ \omega =c^2/4 ~\text{and}~ c>0,
\end{array}
\r.
\end{align}
as $L\to\infty$ (see Lemma \ref{lem:3.6}). The difference of long-period limit of modulus is essential in order that exact periodic solutions yield two types\footnote{From the explicit formulae (\ref{eq:1.7}), the soliton for the case $\omega >c^2/4$ has exponential decay and the soliton for the massless case has algebraic decay. However, exact periodic solutions are represented by the same formula as (\ref{eq:1.41}) in both two cases.} of the solitons on the whole line. Interestingly, indeterminate forms in the long-period limit appear in the massless case. 

To compute the long-period limit, it is often useful to use the maximum value $\sqrt{\eta_3}$ of $\Phi_{\omega ,c}^L$ as a parameter instead of the length of torus $L$. This idea can be seen in \cite{ABS06, A07, AN09}. To apply this idea to our setting, we need to prove
\begin{align}
\label{eq:1.45}
\sqrt{\eta_3} \to \Phi_{\omega ,c}(0) \iff L\to\infty .
\end{align}
The relation (\ref{eq:1.45}) follows from the monotonicity of the functions $\eta_3\mapsto k$ and $\eta_3 \mapsto T_{\Phi_{\omega ,c}^L}$ (see Proposition \ref{prop:3.5} and Proposition \ref{prop:3.7}), where $T_{\Phi_{\omega ,c}^L}$ is the fundamental period of $\Phi_{\omega ,c}^L$. We note that the proofs of these monotonicity are more delicate compared with one-parameter case discussed in previous works. In our proofs, the curve (\ref{eq:1.12}) corresponding to the scaling
is effectively used to derive the detailed properties including the monotonicity.

Next, we study the regularity of the convergence of exact periodic traveling wave solutions in the long-period limit. We can improve the pointwise convergence in Theorem \ref{thm:1.1} as follows.
\begin{theorem}
\label{thm:1.2}
Let $(\omega ,c) \in \R^2$ satisfy \textup{(\ref{WC})}. If for $(\omega ,c)\in\R^2$ we take sufficiently large $L$ such that $L_0<L$, then $\Phi_{\omega ,c}^L$ is well-defined by \textup{(\ref{eq:1.41})}. Then, we have
\begin{align}
\label{eq:1.46}
\lim_{L\to \infty} \| \Phi_{\omega ,c}^L -\Phi_{\omega ,c}\|_{H^m ([-L,L])} =0
\end{align}
for any $m \in \Z_{\geq 0}$.
\end{theorem}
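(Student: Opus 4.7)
My plan is to first reduce general $H^m$-convergence to the cases $m=0$ and $m=1$ via the elliptic equation (\ref{ELL}), and then to handle these two cases by a Brezis--Lieb-type argument and an integration-by-parts argument, respectively. Both $\Phi_{\omega,c}^L$ and $\Phi_{\omega,c}$ satisfy (\ref{ELL}), so repeatedly differentiating the equation shows that for each $k\geq 2$, $\Phi^{(k)}$ is a polynomial in $\Phi$ and $\Phi'$. Combined with the uniform bound $\|\Phi_{\omega,c}^L\|_{L^\infty([-L,L])} \leq \sqrt{\eta_3} < \Phi_{\omega,c}(0)$ from Remark \ref{rem:1.1} and a uniform bound on $(\Phi_{\omega,c}^L)'$ coming from the first integral of (\ref{ELL}), one obtains a Lipschitz-type estimate
\begin{equation*}
\|(\Phi_{\omega,c}^L)^{(k)} - \Phi_{\omega,c}^{(k)}\|_{L^2([-L,L])} \leq C_k\l(\|\Phi_{\omega,c}^L - \Phi_{\omega,c}\|_{L^2} + \|(\Phi_{\omega,c}^L)' - \Phi_{\omega,c}'\|_{L^2}\r),
\end{equation*}
so it suffices to treat $m=0,1$.

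For $L^2$-convergence, the pointwise convergence from Theorem \ref{thm:1.1} together with convergence of the $L^2$-norms, $\|\Phi_{\omega,c}^L\|_{L^2([-L,L])}^2 \to \|\Phi_{\omega,c}\|_{L^2(\R)}^2$, yields $L^2$-convergence by the Brezis--Lieb/Scheff\'e criterion applied to the trivial extensions by zero outside $[-L,L]$. I would verify the norm convergence by explicitly computing $\int_{-L}^L(\Phi_{\omega,c}^L)^2\,dx$ using (\ref{eq:1.41}) in terms of complete elliptic integrals, and then passing to the limit with the long-period limits of the parameters $(\eta_3,g,k,\beta)$ established in the proof of Theorem \ref{thm:1.1}.

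For $H^1$-convergence, set $w = \Phi_{\omega,c}^L - \Phi_{\omega,c}$ and subtract the two instances of (\ref{ELL}) to obtain
\begin{equation*}
-w'' + V_L(x)\,w = 0, \quad V_L = \l(\omega-\tfrac{c^2}{4}\r) + \tfrac{c}{2}\bigl((\Phi_{\omega,c}^L)^2 + \Phi_{\omega,c}^L\Phi_{\omega,c} + \Phi_{\omega,c}^2\bigr) - \tfrac{3}{16}\sum_{j=0}^{4}(\Phi_{\omega,c}^L)^{4-j}\Phi_{\omega,c}^{j},
\end{equation*}
with $\|V_L\|_{L^\infty}\leq C$ uniformly in $L$. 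Testing against $w$ and integrating by parts,
\begin{equation*}
\|w'\|_{L^2([-L,L])}^2 = [w'w]_{-L}^{L} - \int_{-L}^{L} V_L\,w^2\,dx.
\end{equation*}
Since $\Phi_{\omega,c}^L$ is even and $2L$-periodic, $(\Phi_{\omega,c}^L)'(\pm L)=0$, so $w'(\pm L)=-\Phi_{\omega,c}'(\pm L)\to 0$ by the decay of the soliton; this kills the boundary term in the limit, and the remaining integral is bounded by $C\|w\|_{L^2([-L,L])}^2 \to 0$ by the previous step.

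The main obstacle is the norm convergence $\|\Phi_{\omega,c}^L\|_{L^2([-L,L])}\to \|\Phi_{\omega,c}\|_{L^2(\R)}$, which requires delicate asymptotic analysis of complete elliptic integrals as the modulus $k$ approaches its limit. The massless case is especially subtle: as noted in the discussion after Theorem \ref{thm:1.1}, the modulus tends to $1/\sqrt{2}$ rather than $1$, and the algebraic decay of $\Phi_{\omega,c}$ emerges from an indeterminate form, so one must carefully track cancellations rather than invoke the standard soliton limit of elliptic functions.
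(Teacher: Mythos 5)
Your proposal is correct, and its two core steps coincide with the paper's: the $L^2$-convergence is obtained exactly as in the paper by combining the pointwise convergence of Theorem \ref{thm:1.1} with the convergence of the masses via the Br\'ezis--Lieb lemma (the paper's Theorem \ref{thm:4.1} and Theorem \ref{thm:4.2}), and the $H^1$-step is the same integration-by-parts computation, only packaged as a potential $V_L$ with $\|V_L\|_{L^\infty}\leq C$ rather than as a Cauchy--Schwarz bound against $\|\del^2 w\|_{L^2}$; note that you are actually more careful than the paper here, since you track the boundary term $[w'w]_{-L}^{L}$ (using $(\Phi_{\omega,c}^L)'(\pm L)=0$ and the decay of the soliton), which the paper silently drops. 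Where you genuinely diverge is in the treatment of $m\geq 2$: the paper first proves $m=2$ from the equation, then establishes $L^\infty$- and $C^m$-convergence (Proposition \ref{prop:4.4}, Proposition \ref{prop:4.5}) and bootstraps by differentiating \textup{(\ref{ELL})}, using $\sup_L\|\Phi_{\omega,c}^L\|_{W^{1,\infty}}<\infty$ obtained from that convergence; you instead observe that every $\Phi^{(k)}$, $k\geq 2$, is a polynomial in $(\Phi,\Phi')$ and that a uniform $W^{1,\infty}$ bound already follows from $\eta_3<\alpha_1$ and the first integral of \textup{(\ref{ELL})}, which reduces all of \textup{(\ref{eq:1.46})} to $m=0,1$ without invoking $L^\infty$-convergence at all. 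This is a cleaner decomposition for Theorem \ref{thm:1.2} (though the $L^\infty$ machinery is still needed for Theorem \ref{thm:1.3}). The one place your write-up remains a plan rather than a proof is the mass convergence itself: in the paper this is the most delicate point, requiring the reduction to $G(\mu,k)=K(E(\mu,k')-F(\mu,k'))+EF(\mu,k')$ via formula 410.04 of \cite{BF71}, the cancellation $K(k)(E(\mu,k')-F(\mu,k'))=O(k'^2\log(1/k'))\to 0$ when $\omega>c^2/4$, and the Legendre relation \textup{(\ref{eq:2.4})} to produce the value $4\pi$ in the massless case. You correctly identify both the method and the source of difficulty, so nothing in your outline would fail, but a complete proof must carry out that asymptotic analysis.
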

\begin{theorem}
\label{thm:1.3}
Let $(\omega ,c) \in \R^2$ and $\Phi_{\omega ,c}^L$ in the same assumption as Theorem \textup{\ref{thm:1.2}}. Then, we have
\begin{align}
\label{eq:1.47}
\lim_{L\to \infty} \| \Phi_{\omega ,c}^L -\Phi_{\omega ,c}\|_{C^m([-L,L])} =0
\end{align}
for any $m \in \Z_{\geq 0}$.
\end{theorem}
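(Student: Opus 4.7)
The plan is to deduce Theorem \ref{thm:1.3} from Theorem \ref{thm:1.2} by means of a classical Sobolev-type interpolation on the bounded interval $[-L,L]$, tracking the $L$-dependence carefully so that no constant degenerates in the long-period limit. The central tool is the elementary inequality
\begin{align*}
\| f\|_{L^\infty ([-L,L])}^2 \leq \frac{1}{2L} \| f\|_{L^2([-L,L])}^2 + 2 \| f\|_{L^2([-L,L])} \| f'\|_{L^2([-L,L])},
\end{align*}
valid for any $f\in H^1([-L,L])$. This is proved by integrating $f(x)^2 - f(y)^2 = 2\int_y^x f f'\, dt$ in $y$ over $[-L,L]$ and applying Cauchy--Schwarz; it is the substitute on an interval for the embedding $H^1(\R )\hookrightarrow L^\infty (\R )$, with a constant that does \emph{not} blow up as $L\to\infty$.

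First I would establish the case $m=0$. Applying the inequality above to $f = \Phi_{\omega ,c}^L -\Phi_{\omega ,c}$ gives
\begin{align*}
\| \Phi_{\omega ,c}^L -\Phi_{\omega ,c}\|_{L^\infty ([-L,L])}^2 \leq \frac{1}{2L}\| \Phi_{\omega ,c}^L -\Phi_{\omega ,c}\|_{L^2([-L,L])}^2 + 2\| \Phi_{\omega ,c}^L -\Phi_{\omega ,c}\|_{H^1([-L,L])}^2.
\end{align*}
By Theorem \ref{thm:1.2} with $m=0$ and $m=1$, both terms on the right-hand side tend to zero as $L\to\infty$, which yields the $C^0$-convergence.

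For general $m\in \Z_{\geq 0}$, I would apply the same inequality to each derivative $(\Phi_{\omega ,c}^L -\Phi_{\omega ,c})^{(j)}$ for $j=0,1,\ldots ,m$, obtaining
\begin{align*}
\| (\Phi_{\omega ,c}^L -\Phi_{\omega ,c})^{(j)}\|_{L^\infty ([-L,L])}^2
\leq \frac{1}{2L}\| \Phi_{\omega ,c}^L -\Phi_{\omega ,c}\|_{H^j([-L,L])}^2 + 2\| \Phi_{\omega ,c}^L -\Phi_{\omega ,c}\|_{H^{j+1}([-L,L])}^2.
\end{align*}
Theorem \ref{thm:1.2} applied with $m$ replaced by $m+1$ controls every term on the right-hand side uniformly in $j \leq m$, so summing in $j$ gives the claimed $C^m$-convergence. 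Since the only input is the already-proved Theorem \ref{thm:1.2} plus a single elementary interpolation, there is no serious obstacle; the only point requiring care is to write the interval Sobolev inequality with an $L$-independent constant on the $\| f\|_{L^2}\| f'\|_{L^2}$ term, which is what allows the argument to survive the limit $L\to\infty$.
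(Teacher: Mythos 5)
Your argument is correct, and the key inequality
\begin{align*}
\| f\|_{L^\infty ([-L,L])}^2 \leq \frac{1}{2L}\| f\|_{L^2([-L,L])}^2 + 2\| f\|_{L^2([-L,L])}\| f'\|_{L^2([-L,L])}
\end{align*}
is exactly the right way to make the Sobolev embedding survive the limit: its constants are uniform (in fact improving) as $L\to\infty$, which defuses the caution expressed in Remark \ref{rem:1.3}. However, your route is genuinely different from the paper's. The paper does \emph{not} pass through Theorem \ref{thm:1.2} for general $m$; it first proves the $L^\infty$-convergence (Proposition \ref{prop:4.4}) by the fundamental theorem of calculus anchored at $x=0$, where $(\Phi_{\omega,c}^L)^2(0)=\eta_3\to\alpha_1=\Phi_{\omega,c}^2(0)$, upgrades this to $C^m$-convergence for $m\le 2$ (Proposition \ref{prop:4.5}), and then obtains all higher $m$ by differentiating the elliptic equation \textup{(\ref{ELL})} and induction --- the ODE converts uniform convergence of low-order derivatives into uniform convergence of high-order ones with no further functional-analytic input. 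Your approach buys a shorter, more self-contained deduction that never touches the equation again; the paper's approach buys independence from the full strength of Theorem \ref{thm:1.2}. That independence matters for one caveat you should address: in the paper, Theorem \ref{thm:1.2} for $m\ge 3$ is itself proved \emph{using} Proposition \ref{prop:4.5}, i.e.\ using the $m\le 2$ case of Theorem \ref{thm:1.3}, so quoting ``Theorem \ref{thm:1.2} for $m+1$'' as a black box risks circularity within the paper's architecture. The fix is easy and worth stating: Theorem \ref{thm:4.2} gives $H^2$-convergence independently, your inequality with $j\le 1$ then gives $C^1$-convergence, and from there the $H^m$ and $C^m$ inductions can be run in tandem; with that ordering made explicit, your proof is complete.
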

%%%%%
%%%%%
\begin{remark}
\label{rem:1.3}
We note that Theorem~\ref{thm:1.3} is not proved directly from Theorem~\ref{thm:1.2} by using the Sobolev embedding $H^m([-L,L]) \subset C^k([-L,L]) ~(k<m)$, because constants in the Sobolev inequality depend on size of the interval $2L$. 
\end{remark}
%%%
\begin{remark}
\label{rem:1.4}
We can replace $\Phi_{\omega ,c}^L$ [resp. $\Phi_{\omega ,c}$] by $\varphi_{\omega ,c_L}^L$ [resp. $\varphi_{\omega ,c}$] in both (\ref{eq:1.46}) and (\ref{eq:1.47}) if we take $c_L \in \frac{2\pi}{L}\Z$ such that $c_L\to c$ as $L\to\infty$. Especially, we obtain the uniform bound of periodic traveling wave solutions as
\begin{align}
\label{eq:1.48}
\sup_{L_0<L<\infty} \| u_{\omega ,c_L}^L\|_{L^{\infty} (\R ,H^m(\T_{2L}))} <\infty 
\end{align}
for any $m\in\Z_{\geq 0}$, where $u_{\omega ,c_L}^L$ is defined by (\ref{eq:1.42}). 
\end{remark}
%%%%%%%%%%%%
%%%%%%%%%%%%
To the best of our knowledge, the regularity results in Theorem \ref{thm:1.2} and Theorem \ref{thm:1.3} are new even if we restrict the one-parameter case ($\omega >0$ and $c=0$). For the proof of Theorem \ref{thm:1.2} and Theorem \ref{thm:1.3}, to prove the $L^2$-convergence in the long-period limit is the key step.  First, we show that the mass of exact periodic solutions is exactly same as the mass of the solitons in the long-period limit (see Theorem \ref{thm:4.1}). Here again, the difference between the case $\omega >c^2/4$ and the massless case appears. We need to do a delicate calculation of elliptic integrals in this step. Next, by combining pointwise convergence in Theorem \ref{thm:1.1} and the Br\'{e}zis-Lieb lemma, we obtain $L^2$-convergence. Since $\Phi_{\omega ,c}^L$ and $\Phi_{\omega ,c}$ satisfy the same elliptic equation, we can obtain $H^2$-convergence from $L^2$-convergence and the equation. Especially, we obtain $L^{\infty}$-convergence from $H^1$-convergence. The proof of $L^{\infty}$-convergence here is related to the proof of the Sobolev inequality, but we need to calculate the dependence of the size $L$ more carefully. The rest of the proof is done by a standard bootstrap argument. We note that the detailed properties of exact periodic solutions are used throughout the proof. 

We remark that one can apply our approach to periodic wave solutions of other type of dispersive equations such that KdV, mKdV and cubic NLS in previous results (see \cite{A09} and references therein). 
% discussed in \cite{ABS06, A07, A09, GH07a, GH07b, GLT16}.
\begin{remark}
\label{rem:1.5}
If we consider the periodic gauge transformed solution
\begin{align}
\label{eq:1.49}
v_{\omega ,c_L}^L := \scG_{-\frac{1}{4}}(u_{\omega ,c_L}^L) (t, x-\frac{1}{2}\mu t ),
\end{align}
then $v_{\omega ,c_L}^L$ satisfies the following equation:
\begin{align*}
i\del_t v +\del_x^2 v +i|v|^2\del_x v +e_L (v) =0,
\end{align*}
where
\begin{align*}
e_L (v) &:= \psi (v)v +\frac{1}{4}\mu |v|^2v, \\
\psi (v)&:= -\frac{1}{8L} \int_{0}^{2L} \l( 2\im (\overline{v}\del_x v) (t, \theta ) +\frac{1}{8} |v|^4 (t ,\theta ) \r) d\theta +\frac{1}{16}\mu^2 ,
\end{align*}
and $\mu =\frac{1}{2L} \| v\|_{L^2(\T_{2L})}^2$. From the uniform bound (\ref{eq:1.48}) and formula of the error term, we deduce that
\begin{align}
\label{eq:1.50}
\| e_L (v_{\omega ,c}^L)\|_{L^{\infty} (\R , H^m(\T_{2L}))} \to 0
\end{align}
as $L\to\infty$ for any $m\in\Z_{\geq 0}$. This means that the solution $v_{\omega ,c_L}^L$ gives the main part of $2L$-periodic traveling wave solutions of (\ref{DNLS}) which yield the solitons in the long-period limit, at least when $L$ is sufficiently large. One can apply a similar discussion to the equation (\ref{eq:1.1}) on $\T_{2L}$. 
\end{remark}

%Anguloらの研究　1parameter 定在波　長周期極限で各点収束の議論は見られるが，このような収束性を上げる議論は考えていない．本研究がこの点においても新しいといえる．

\subsection{Related problems and remarks} 
%(in preparation)\\
% importance of study of solitons, difficulty of periodic problems on stability instability, variational characterization, application for the global result, Colin-Ohta result, difficulty of two parameters, deep structure compared with one-parameter case
Compared with the solitons on the whole line, it is natural to consider that the periodic traveling waves defined by (\ref{eq:1.42}) belong to the ground states, but the rigorous proof has not been obtained yet. Variational characterizations on a torus have different difficulties from the whole line case. Since a torus is compact, the existence of a minimizer for the problems is easily obtained. However, the identification of this minimizer is a delicate problem since the elliptic equation (\ref{ELL}) has rich structure of solutions compared with the one on the whole line. This problem is also related to uniqueness of ground states. 
Recently, variational characterizations of periodic waves for cubic NLS were obtained in \cite{GLT16}, but the problems in our setting are more delicate. 
%suitable functionals, for instance, Nehari functional, we can obtain a new global result to the periodic setting strictly including previous results

The stability/instability of the periodic traveling waves is a natural problem as a next step. First, we note that $\varphi_{\omega ,c_L}^L$ satisfies the equation (\ref{eq:1.38}) on $\T_{2L}$, which is equivalent that
\begin{align}
\label{eq:1.51}
{\cS}_{\omega ,c_L} ' (\varphi_{\omega ,c_L}^L ) =0,
\end{align}
where 
\begin{align*}
{\cS}_{\omega ,c} (\varphi )={\cE}(\varphi )+\frac{\omega}{2} {\cM} (\varphi )+\frac{c}{2} {\cP} (\varphi ).
\end{align*} 
The relation (\ref{eq:1.51}) is important when one considers the problems of both variational characterization and stability. There are several difficulties when one considers the stability/instability problem in our setting. We note that the equation (\ref{DNLS1}) can not be rewritten as the Hamiltonian form by using the energy functional as (\ref{eq:1.4}).
% Therefore, we can not apply the arguments depending on the abstract theory by Grillakis, Shatah, and Strauss to our setting directly. 
The lack of Hamiltonian structure causes the delicate problems when one considers the stability/instability problem; see \cite{GW95} for partial results on the stability.\footnote{In \cite{GW95} they consider the stability problem on the whole line in the setting which can not be rewritten as the Hamiltonian form as (\ref{eq:1.4}).}
% Their proof is based on the abstract theory by Grillakis, Shatah, and Strauss
%%
To prove stability or instability of solitons, it is important to calculate second derivatives. However, since we only take $c_L \in \frac{2\pi}{L}\Z$ as a discrete value, this gives the difficulty of differential calculation of ${\cS}_{\omega ,c_L} (\varphi_{\omega ,c_L}^L )$. We recall that Colin and Ohta \cite{CO06} proved orbital stability of solitons (\ref{eq:1.5}) by showing that the matrix $d'' (\omega ,c)$ has one positive eigenvalue, where $d(\omega ,c)$ is defined by
\begin{align}
\label{eq:1.52}
d(\omega ,c ) := S_{\omega ,c} (\phi_{\omega ,c}) .
\end{align}
We note that when $\omega >c^2/4$ and $c>0$ we have
\begin{align}
\label{eq:1.53}
\del_{\omega}^2 d(\omega ,c) &=\frac{1}{2}\del_{\omega} M(\phi_{\omega ,c}) = -\frac{c}{\omega\sqrt{4\omega -c^2}}<0, \\
\label{eq:1.54}
\del_{c}^2 d(\omega ,c) &=\frac{1}{2}\del_{c} P (\phi_{\omega ,c}) = -\frac{c}{\sqrt{4\omega -c^2}}<0. 
\end{align}
If one considers the solitons as a one-parameter $\omega\mapsto\phi_{\omega ,c}$ or $c\mapsto\phi_{\omega ,c}$, (\ref{eq:1.53}) and (\ref{eq:1.54}) seem to indicate that the solitons are unstable, but actually they are stable. This means that the calculation as a one-parameter is not enough to fix the stability problems, and shows one of the deep structure of a two-parameter family of solitons. 

Although there are several difficulties on the stability/instability problems as above, it is important to study these problems in understanding further properties of exact periodic traveling wave solutions and related dynamics.
% which is expected to be helpful to investigate dynamics for the derivative nonlinear Schr\"{o}dinger equation. 
We refer to \cite{A07, A09, ABS06, AN08, AN09, GH07a, GH07b, GLT16} for the studies on the stability/instability of the periodic profiles.
The author hopes that our results in this paper would provide further insight on the dynamics for the derivative nonlinear Schr\"{o}dinger equation.

% % 長周期極限でソリトンに対応するperiodic traveling wave を見つける。これはトーラス上のDNLSの進展だけでなく、$\R$上のソリトンに対してもより深い構造の解明、さらなる深い解析への応用が期待される。
% It is expected that this is also useful to do further study solitons on the whole line from the viewpoint of long-period.

\subsection{Organization of the paper}%Organization
The rest of this paper is organized as follows. In Section \ref{sec:2} we recall the definition and basic properties of elliptic functions and elliptic integrals. In Section \ref{sec:3} we discuss construction and fundamental properties of exact periodic traveling wave solutions, and give a proof of Theorem \ref{thm:1.1}. In Section \ref{sec:4}, we discuss the regularity of the convergence in the long-period limit and prove Theorem \ref{thm:1.2} and Theorem \ref{thm:1.3}. In Appendix \ref{sec:A} and \ref{sec:B}, we prove monotonicity of the modulus and the fundamental period.

%興味深いことにmassless caseの計算で不定形の極限がいくつかのステップで現れる。

%%%%%%%%%%%%%%%%%%%%%%%%%%%
%%%%%%%%%%%%%%%%%%%%%%%%%%%
\section{Preliminaries}
\label{sec:2}%
Here, we recall the definitions and some basic properties of elliptic functions and elliptic integrals. We refer the reader to \cite{BF71, L89} for more details. Given $k \in (0,1)$, the incomplete elliptic integral of the first kind is defined by
\begin{align*}
u= F(\varphi ,k) := \int_{0}^{\varphi} \frac{d\theta}{\sqrt{1-k^2\sin^2 \theta}}.
\end{align*}
The Jacobi elliptic functions are defined through the inverse function of $F(\cdot ,k)$ by
\begin{align*}
\mathrm{sn} (u;k):=\sin \varphi ,~\mathrm{cn}(u;k) :=\cos \varphi ,~
\mathrm{dn}(u;k) :=\sqrt{1-k^2\mathrm{sn}^2 (u;k)} .
\end{align*}
The complete elliptic integral of the first kind is defined by
\begin{align*}
K=K(k):= F\l( \frac{\pi}{2} ,k\r) .
\end{align*}
The functions $\mathrm{sn}$, $\mathrm{cn}$ and $\mathrm{dn}$ have a real fundamental period, namely, $4K$, $4K$, and $2K$, respectively. We note that
\begin{align}
\label{eq:2.1}
K(k) \to
\l\{
\begin{array}{ll}
\frac{\pi}{2} & \text{as}~k\to 0,\\[3pt]
 \infty & \text{as}~k\to 1.
\end{array}
\r.
\end{align}
More specifically, when $k\to 1$, the function $K(k)$ has the following asymptotic behavior: %
\begin{align}
\label{eq:2.2}
\lim_{k\to 1} \l( K(k)-\log \frac{4}{k'}\r) =0,
\end{align}
where the complementary modulus $k'$ is define by
\begin{align*}
k' :=\sqrt{1-k^2}.
\end{align*}
Elliptic functions have the following extremal formulae:
%We have the following extremal cases:
\begin{align}
\label{eq:2.3}
\l\{
\begin{array}{l}
\ds \mathrm{sn} (u ;0)=\sin u, ~\mathrm{cn} (u;0) =\cos u, ~\mathrm{dn} (u ;0) \equiv 1, \\[3pt]
\ds \mathrm{sn} (u ;1)=\tanh u, ~\mathrm{cn} (u;1) =\mathrm{dn} (u ;1) =\mathrm{sech}\,u.
\end{array}
\r.
\end{align}
This shows that elliptic functions bridge the gap between trigonometric and hyperbolic functions. 

The incomplete elliptic integral of the second kind is defined by
\begin{align*}
E(\varphi ,k) := \int_{0}^{\varphi} \sqrt{1-k^2\sin^2 \theta}d\theta .
\end{align*}
The complete elliptic integral of the second kind is defined by
\begin{align*}
E=E(k):= E\l( \frac{\pi}{2} ,k\r) .
\end{align*}
We define by
\begin{align*}
&K'=K'(k):=K(k'), \\
&E'=E'(k):= E(k').
\end{align*}
Then, we have the following Legendre relation
\begin{align}
\label{eq:2.4}
EK'+E'K-KK' =\frac{\pi}{2}\quad\text{for all}~ k \in (0,1).
\end{align}
%%%%%%%%%%%%%%%%%%%%%
%%%%%%%%%%%%%%%%%%%%%
\section{Existence of exact periodic traveling waves}
\label{sec:3}
\subsection{Construction of exact solutions}%exact periodic solutions
\label{sec:3.1}
We consider the elliptic equation (\ref{ELL}) on $\T_{2L}$. Set $\psi =\Phi ^2$. By multiplying the equation (\ref{ELL}) by $\Phi'$ and integrating, $\psi$ satisfies the following equation
\begin{align}
\label{eq:3.1}
[\psi ']^2 &= -\frac{1}{4}\psi^4 +c \psi^3 +4\l( \omega - \frac{c^2}{4}\r) \psi^2 +8C_{\psi}\psi ,
\end{align}
where $C_{\psi}$ is a constant of integration. The formula (\ref{eq:3.1}) can be rewritten as
\begin{align}
\label{eq:3.2}
[\psi ']^2 = \frac{1}{4} P_{\psi} (\psi ),
\end{align}
where the polynomial $P_\psi$ is defined by
\begin{align*}
P_{\psi} (t) &=  -t^4+4ct^3 +16 \l( \omega -\frac{c^2}{4}\r) t^2 +32C_{\psi}t \\
&=t(t-\eta_{1})(t -\eta_{2}) (\eta_{3} -t).
\end{align*}
Here, $\eta_1 , \eta_2$ and $\eta_3$ are roots of the polynomial $P_{\psi}$ satisfying
\begin{align}
\label{eq:3.3}
\l\{
\begin{array}{l}
\eta_1 +\eta_2 +\eta_3 =4c, \\
\eta_2 \eta_3 +\eta_1 \eta_3 +\eta_1 \eta_2 =-16 \l( \omega - \frac{c^2}{4}\r) ,\\
\eta_1 \eta_2 \eta_3 =32C_{\psi}.
\end{array}
\r.
\end{align}
Since we are interested in the positive solution, we may set $0 < \eta_{2}<\eta_{3}$. We note that $\eta_3$ [resp. $\eta_2$] is the maximum [resp. minimum] value of $\psi$ by (\ref{eq:3.2}). By (\ref{eq:3.3}) and (\ref{WC}), $\eta_1$ must be negative.  By invariance of translations, we may assume that $\psi (0)=\eta_3$ and $\psi '(0)=0$. From uniqueness of the ordinary differential equation and the equation (\ref{ELL}), $\psi$ is even. Since we want to construct single-bump solutions, we may assume that $\psi (L)=\eta_2$.
Therefore, it is enough to consider the equation (\ref{eq:3.2}) on $[0, L]$. Since $\psi ' (x) <0$ when $0 <x < L$, integrating both sides of (\ref{eq:3.2}) over $[0,x]$ yields that
% we obtain by integration over over $[0,x]$
\begin{align*}
- \int_{0}^x \frac{\psi ' (y)}{\sqrt{P_{\psi} (\psi (y))}} dy = \frac{1}{2}x .
\end{align*}
Changing variables $t = \psi (x)$ in the integral implies that
%Grafakos書き方参照
\begin{align}
\int_{\psi (x)}^{\eta_3} \frac{dt}{\sqrt{t(\eta_3 -t)(t-\eta_2 )(t-\eta_1 )}} = \frac{1}{2}x .
\end{align}
Applying the formula 257.00 in \cite{BF71}, we conclude that
\begin{align}
\label{eq:3.5}
\psi (x) =\frac{\eta_3 (\eta_2 -\eta_1 )+(\eta_3 -\eta_2 )\eta_1 \mathrm{sn}^2\l( \frac{x}{2g} ;k\r) }{(\eta_2 -\eta_1 )+(\eta_3 -\eta_2 ) \mathrm{sn}^2 \l( \frac{x}{2g} ; k\r)} ,
\end{align}
where
\begin{align}
\label{eq:3.6}
k^2 &=\frac{-\eta_{1}(\eta_3-\eta_2 )}{\eta_3 (\eta_2 -\eta_1 )}, \\
\label{eq:3.7}
g&=\frac{2}{\sqrt{\eta_3 (\eta_2 -\eta_1 )}} .
\end{align}
We note that $0<k^2<1$ from the inequality $\eta_1 <0<\eta_2 <\eta_3$. By using the expression of $k$, the formula (\ref{eq:3.5}) can be rewritten as
\begin{align}
\label{eq:3.8}
\psi (x) =\eta_3 \l[ \frac{\mathrm{dn}^2 \l( \frac{x}{2g} ; k\r)}{1+ \beta^2 \mathrm{sn}^2 \l( \frac{x}{2g} ; k\r)} \r] ,
\end{align}
with $\beta^2 =-\eta_3 k^2/\eta_1 >0$. From the fundamental periods of $\mathrm{sn}$ and $\mathrm{dn}$, the fundamental period $T_\psi$ of $\psi$ is given by
\begin{align}
\label{eq:3.9}
T_{\psi}= 4g K(k) = \frac{8}{\sqrt{\eta_3 (\eta_2 -\eta_1 )}} K(k).
\end{align}
Since we assume $\psi$ is the single-bump solution, we obtain
\begin{align}
\label{eq:3.10}
2L =T_{\psi}= \frac{8}{\sqrt{\eta_3 (\eta_2 -\eta_1 )}} K(k).
\end{align}
%%%%%%%%
%%%%%%%%
Substituting the first equation in (\ref{eq:3.3})
\begin{align}
\label{eq:3.11}
\eta_1 =4c -\eta_2 -\eta_3
\end{align}
into the second equation in (\ref{eq:3.3}), we obtain
\begin{align}
\label{eq:3.12}
\eta_2^2 + \eta_3^2 +\eta_2 \eta_3 -4c (\eta_2 +\eta_3 )-16 \l( \omega -\frac{c^2}{4}\r) =0.
\end{align}
From (\ref{eq:3.11}) and (\ref{eq:3.12}), $\eta_1$ and $\eta_2$ have expressions as functions of $\eta_3, \omega$ and $c$ as
\begin{align}
\label{eq:3.13}
\eta_1 &=\frac{-\eta_3 +4c -\sqrt{A}}{2}, \\
\label{eq:3.14}
\eta_2 &=\frac{-\eta_3 +4c +\sqrt{A}}{2},
\end{align} 
where $A$ is defined by
\begin{align}
\label{eq:3.15}
A=A(\eta_3 ):=64\omega -3\eta_3^2 +8c\eta_3 .
\end{align}
The following two extreme cases can be considered;
\begin{enumerate}[(i)]
\setlength{\itemsep}{3pt}

\item $\eta_2 =\eta_3=:\alpha_0$.
% \item $\eta_2 =\eta_3$, which is corresponding to $k=0$. 

\item $\eta_2 =0$, $\eta_3=:\alpha_1$.
% , which is corresponding to $k=1$.
\end{enumerate}
The case (i) corresponds to the constant solution of (\ref{ELL}).
%The case (i) corresponds to the semi-trivial solutions which were discussed in \cite{I10}. 
%We do not  discuss it further here.
% Since we want to take the long-period limit,  we only consider the case (ii). 
The case (ii) corresponds to the long-period limit as discussed in detail later. From the equation (\ref{eq:3.12}), we obtain that
\begin{align*}
% \alpha_0 &= \frac{4c+\sqrt{48\omega +4c^2}}{3}, \\
\alpha_0 &= \frac{1}{3} \l( 4c+\sqrt{48\omega +4c^2}\r) , \\
\alpha_1 &=4\sqrt{\omega}+2c .
\end{align*}
It is worthwhile to note that $\alpha_1 =\Phi_{\omega ,c}^2(0)$, where $\Phi_{\omega ,c}$ is defined by (\ref{eq:1.7}). 
%%
%%%%%%%%%%%%%%%%%%%%%
%%%%%%%%%%%%%%%%%%%%%
\subsection{Fundamental properties of exact solutions}%exact periodic solutions
In this subsection, we investigate detailed relation between parameters defined in Section \ref{sec:3.1}. 
%Fix $(\omega ,c)$ satisfying (\ref{WC}). 
For the convenience of calculation, we introduce the following notations. When $(\omega ,c)$ satisfies (\ref{WC}), we can write
\begin{align*} 
%%label{c}%
c=2s\sqrt{\omega}
\end{align*} 
for $\omega >0$ and some  $s \in (-1, 1]$. The case $s=1$ corresponds to the massless case. By using this notation, $\alpha_0$ and $\alpha_1$ are rewritten as
\begin{align*}
\alpha_0 &=\frac{4}{3}\l( 2s+\sqrt{3+s^2}\r) \sqrt{\omega}, \\
\alpha_1 &=4(1+s)\sqrt{\omega}.
\end{align*}
Set $\beta_0 =\frac{\alpha_0}{4\sqrt{\omega}}$ and $\beta_1=\frac{\alpha_1}{4\sqrt{\omega}}$. We have
\begin{align}
\label{eq:3.16}
\beta_0 &=\beta_0 (s)=\frac{1}{3}\l( 2s+\sqrt{3+s^2}\r) ,\\
\label{eq:3.17}
\beta_1 &=\beta_1 (s)=1+s
\end{align}
for $-1<s\leq 1$. We begin with the following lemma.
\begin{lemma}
\label{lem:3.1}
Let $(\omega ,c)$ satisfy \textup{(\ref{WC})}. Then, we have $0<\alpha_0 <\alpha_1$. 
\end{lemma}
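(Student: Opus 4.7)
The plan is to work in the scaling-invariant parametrization $c = 2s\sqrt{\omega}$ introduced just before the statement, which reduces the claim to a purely one-variable inequality for $s \in (-1, 1]$. Since $\sqrt\omega > 0$ (note (WC) forces $\omega > 0$), the double inequality $0 < \alpha_0 < \alpha_1$ is equivalent to $0 < \beta_0(s) < \beta_1(s)$, where $\beta_0$ and $\beta_1$ are given by the formulas (\ref{eq:3.16})--(\ref{eq:3.17}) already stated in the text. The whole problem is then to verify two scalar inequalities on the half-open interval $(-1, 1]$.

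For positivity, I would show $2s + \sqrt{3+s^2} > 0$ by splitting on the sign of $s$: for $s \geq 0$ this is immediate, while for $s \in (-1, 0)$ it is equivalent, after squaring the rearranged form $\sqrt{3+s^2} > -2s$ (both sides nonnegative), to $3 + s^2 > 4 s^2$, i.e.\ $s^2 < 1$, which is exactly the range. For the upper bound $\beta_0 < \beta_1$, I would clear the $\tfrac13$ and rearrange to
\begin{equation*}
\sqrt{3 + s^2} \;<\; 3 + s.
\end{equation*}
The right-hand side is strictly positive on $(-1, 1]$, so squaring is legitimate and yields $3 + s^2 < 9 + 6s + s^2$, i.e.\ $s > -1$, which again matches the parameter range.

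If one prefers a proof that avoids the substitution, the same two steps can be carried out directly on the original expressions. Positivity of $\alpha_0$ reduces to $\sqrt{48\omega + 4c^2} > -4c$, which is automatic for $c \geq 0$ and, for $c < 0$, follows by squaring and using $\omega > c^2/4$ from (WC). For the upper bound, one rearranges $\alpha_0 < \alpha_1$ into $\sqrt{48\omega + 4c^2} < 12\sqrt\omega + 2c$, checks that the right-hand side is positive (it equals $2(6\sqrt\omega + c)$, and $36\omega > c^2$ is implied by $\omega \geq c^2/4$), squares, cancels, and obtains $0 < 96\omega + 48 c\sqrt\omega$, i.e.\ $2\sqrt\omega + c > 0$, which holds under (WC) in both the strict and the borderline massless case.

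There is no real obstacle here; the only point requiring a bit of care is to check the sign of each side before squaring, which is the reason the hypothesis (WC) is needed in the form stated (the equality case $\omega = c^2/4$ is allowed only when $c > 0$, which is precisely what keeps $2\sqrt\omega + c > 0$ in the boundary situation). I would present the proof in the $s$-parametrization since the formulas are cleaner and the inequality $s > -1$ highlights exactly why the endpoint $s = -1$, corresponding to $c = -2\sqrt\omega$ (i.e.\ $\omega = c^2/4$ with $c < 0$, excluded by (WC)), is the only value at which $\alpha_0 = \alpha_1$ degenerates.
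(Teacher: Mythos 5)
Your proof is correct and follows essentially the same route as the paper: pass to the scaling variable $s$ via $c=2s\sqrt{\omega}$, reduce to $0<\beta_0(s)<\beta_1(s)$, and verify each inequality by rearranging and squaring, with the positivity reducing to $s^2<1$ and the upper bound to $s>-1$. The extra sign checks before squaring and the alternative computation in the original $(\omega,c)$ variables are fine but not needed beyond what the paper already does.
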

\begin{proof}
From the definition, we note that
\begin{align*}
0<\alpha_0 <\alpha_1 
\iff 0<\beta_0 < \beta_1 .
\end{align*}
First, we prove $\beta_ 0 >0$. This is trivial from the definition (\ref{eq:3.16}) when $0\leq s\leq 1$. When $s<0$, we have
\begin{align*}
\beta_0 >0 &\iff -2s <\sqrt{3+s^2} \\
&\iff 0<3(1-s^2).
\end{align*}
The last inequality holds when $-1<s<0$.

Next, we prove $\beta_0 <\beta_1$. When $-1<s\leq 1$, we have
\begin{align*}
\beta_0  < \beta_1 
&\iff \frac{1}{3}\l( 2s+\sqrt{3+s^2}\r) <1+s \\
&\iff 0<6(s+1).
\end{align*}
The last inequality holds when $-1<s\leq 1$. This completes the proof.
\end{proof}
%%%%%%%%%%%%%
%楕円のグラフ
%%%%%%%%%%%%%
\begin{figure}[t]
\begin{center}
\includegraphics[width=10cm]{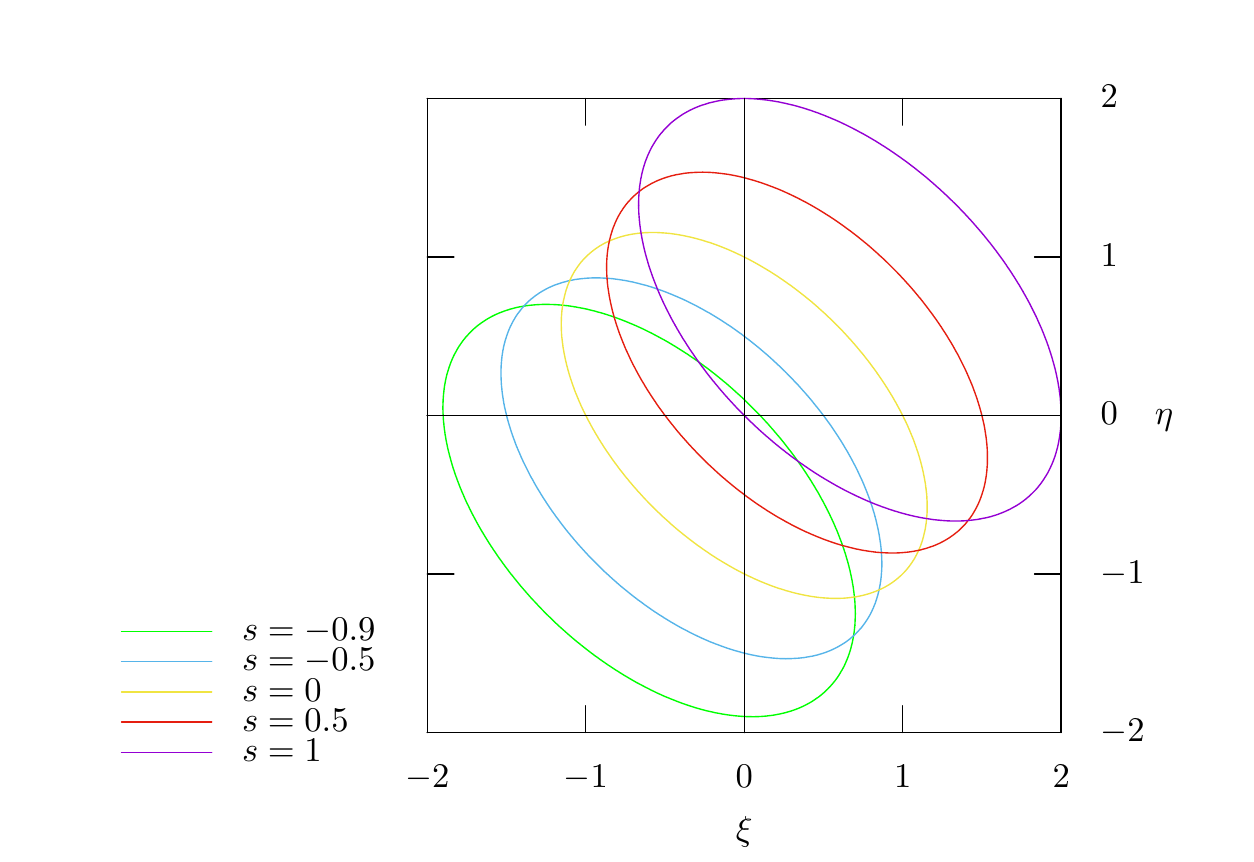}
%図はコンパイルに時間がかかるので、最後に入れよう。
\caption{The ellipse (\ref{eq:3.19}) for several values of $s$. Note that the ellipse moves toward upper right when one changes the parameter $s$ from $-1$ to $1$.}
\label{fig:1}
\end{center}
\end{figure}
%%%%%%%%%%%
%%%%%%%%%%%
We recall that $(\eta_2 , \eta_3)$ satisfies the constraint condition (\ref{eq:3.12}). Set 
\begin{align}
\label{eq:3.18}
\xi =\frac{\eta_2}{4\sqrt{\omega}}, \quad \eta =\frac{\eta_3}{4\sqrt{\omega}}
\end{align}
Substituting (\ref{eq:3.18}) and $c=2s\sqrt{\omega}$ into (\ref{eq:3.12}), the equation (\ref{eq:3.12}) is equivalent that
\begin{align}
\label{eq:3.19}
(\xi -s)^2+(\eta -s)^2+\xi\eta =1+s^2,
\end{align}
where $-1<s\leq 1$. The equation (\ref{eq:3.19}) represents the ellipse as in Figure \ref{fig:1}. Note that $(\beta_0 , \beta_0)$ corresponds to a intersection point between line $\eta =\xi$ and ellipse (\ref{eq:3.19}), and that $(0, \beta_1)$ corresponds to a intersection point between line $\xi=0$ and ellipse (\ref{eq:3.19}). Since we assumed that $0<\eta_2 <\eta_3$ in Section \ref{sec:3.1}, it follows that
\begin{align}
\label{eq:3.20}
\alpha_0 < \eta_3 <\alpha_1 ,
\end{align}
or equivalently
\begin{align}
\label{eq:3.21}
\beta_0 < \eta <\beta_1 .
\end{align}

We can prove positivity of $A$ defined by (\ref{eq:3.15}) under the condition (\ref{eq:3.20}).
\begin{lemma}
\label{lem:3.2}
Let $(\omega ,c)$ satisfy \textup{(\ref{WC})} and let $\eta_3$ satisfy \textup{(\ref{eq:3.20})}. Then, we have $A=A(\eta_3 )>0$.
\end{lemma}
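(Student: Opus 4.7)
The plan is to exploit the fact that $A(\eta_3) = -3\eta_3^2 + 8c\eta_3 + 64\omega$ is a concave (downward-opening) quadratic in the variable $\eta_3$, so that its positivity on the open interval $(\alpha_0,\alpha_1)$ follows from its sign at the two endpoints.

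First, I would evaluate $A$ at $\alpha_0$. Since $\alpha_0$ is defined as the larger root of $\eta_2 = \eta_3$ in \eqref{eq:3.12}, the same relation gives $3\alpha_0^2 - 8c\alpha_0 - 16\omega + 4c^2 = 0$ (this can be seen either from the formula $\alpha_0 = \tfrac{1}{3}(4c+\sqrt{48\omega+4c^2})$ or directly from \eqref{eq:3.12} with $\xi = \eta = \beta_0$). Rearranging yields $-3\alpha_0^2 + 8c\alpha_0 = 4c^2 - 16\omega$, hence
\begin{equation*}
A(\alpha_0) = 4c^2 + 48\omega,
\end{equation*}
which is strictly positive for any $(\omega,c)$ satisfying \eqref{WC}.

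Next I would evaluate $A$ at $\alpha_1 = 4\sqrt{\omega}+2c$ by direct substitution. Expanding $\alpha_1^2 = 16\omega + 16c\sqrt{\omega} + 4c^2$ and $8c\alpha_1 = 32c\sqrt{\omega}+16c^2$ and combining gives, after simplification,
\begin{equation*}
A(\alpha_1) = 16\omega - 16c\sqrt{\omega} + 4c^2 = 4(2\sqrt{\omega}-c)^2 \geq 0,
\end{equation*}
with equality only in the massless case $c=2\sqrt{\omega}$.

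Finally, I would invoke concavity: for every $\eta_3 = \lambda \alpha_0 + (1-\lambda)\alpha_1$ with $\lambda \in (0,1]$,
\begin{equation*}
A(\eta_3) \;\geq\; \lambda\, A(\alpha_0) + (1-\lambda)\, A(\alpha_1) \;\geq\; \lambda\, A(\alpha_0) \;>\; 0,
\end{equation*}
since $A(\alpha_0)>0$ and $A(\alpha_1)\geq 0$. Because $\eta_3 \in (\alpha_0,\alpha_1)$ corresponds to some $\lambda \in (0,1)$, this gives $A(\eta_3)>0$, completing the proof. No step looks like a genuine obstacle; the only mild subtlety is keeping track of the fact that $A(\alpha_1)$ can vanish in the massless case, which is why it matters that $\eta_3$ lies strictly below $\alpha_1$ and that $A(\alpha_0)$ is strictly positive.
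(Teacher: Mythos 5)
Your proof is correct, but it is organized differently from the paper's. The paper normalizes via $c=2s\sqrt{\omega}$, $\eta_3=4\sqrt{\omega}\,\eta$, writes $A=16\omega f_s(\eta)$ with $f_s(\eta)=-3\eta^2+4s\eta+4$, locates the positive root $\gamma=\tfrac{2}{3}(s+\sqrt{s^2+3})$ of $f_s$, and shows $\beta_1\leq\gamma$ (with equality exactly at $s=1$); since $f_s(0)=4>0$, the interval $(\beta_0,\beta_1)\subset(0,\gamma)$ lies between the two roots, so $f_s>0$ there. You instead stay in the original variables and evaluate the concave quadratic $A$ at the two endpoints, getting $A(\alpha_0)=4c^2+48\omega>0$ and $A(\alpha_1)=4(2\sqrt{\omega}-c)^2\geq 0$, and conclude by concavity on the open interval $(\alpha_0,\alpha_1)$ (which requires $\alpha_0<\alpha_1$, i.e.\ Lemma \ref{lem:3.1}, as you implicitly use). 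Both arguments are elementary facts about a downward-opening quadratic; yours has the minor advantages of producing the explicit value $A(\alpha_0)=4c^2+48\omega$ promised in Remark \ref{rem:3.1} and of recovering in passing the identity $A(\alpha_1)=(4\sqrt{\omega}-2c)^2$ that the paper only derives later in Lemma \ref{lem:3.4}, while the paper's root-location argument sets up the function $f_s$ and the parameter $s$ that are reused heavily in Lemma \ref{lem:3.3}, Lemma \ref{lem:3.6} and the appendices. Your attention to the degenerate endpoint $A(\alpha_1)=0$ in the massless case, and to the strictness of $\eta_3<\alpha_1$, is exactly the right subtlety to flag.
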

%%%%%%%%%%%%%%%%%
\begin{proof}
By using $c=2s\sqrt{\omega}$ and $\eta_3 =4\sqrt{\omega}\eta$, we can rewrite $A$ as
\begin{align*}
A&=64\omega -3\eta_3^2 +8c\eta_3 \\
&=16\omega (-3\eta^2 +4s\eta +4).
\end{align*}
We define the function $f_s$ by 
\begin{align}
\label{eq:3.22}
% f(\eta ) =
f_s(\eta ) :=-3\eta^2 +4s\eta +4
\end{align}
for $-1<s\leq 1$. A positive zero of $f_s(\eta )$ is given by
\begin{align}
\label{eq:3.23}
\gamma =\frac{2}{3}\l( s+\sqrt{s^2+3}\r) .
\end{align}
We obtain $\beta_1\leq\gamma$ for $-1<s\leq 1$. Indeed, we have
\begin{align*}
\beta_1 \leq \gamma &\iff 1+s \leq \frac{2}{3}\l( s+\sqrt{s^2+3}\r)\\
&\iff 0 \leq 3(s-1)^2.
\end{align*}
The last inequality means that $\beta_1 =\gamma$ when $s=1$ and $\beta_1 <\gamma$ otherwise. Since $f_s(0) =4>0$ and $0<\beta_0 <\beta_1$, this implies that $f_s (\eta ) >0$ for $-1 <s\leq 1$ and $\beta_0 <\eta <\beta_1$. This completes the proof.
\end{proof}
\begin{remark}
\label{rem:3.1}
From the proof of Lemma \ref{lem:3.2} above, we also deduce that $A( \alpha_0)$ is a positive constant depending on $(\omega ,c)$.
\end{remark}
%%%%%%%%%
From Figure \ref{fig:1}, one can observe that $\eta_2$ decreases from $\alpha_0$ to $0$ when one changes $\eta_3$ from $\alpha_0$ to $\alpha_1$.  We can prove this result rigorously.
% We can prove the following result rigorously, although it seems to be obvious from Figure \ref{fig:1}.
% (see also Figure \ref{fig:1}).
\begin{lemma}
\label{lem:3.3} 
Let $(\omega ,c)$ satisfy \textup{(\ref{WC})}. Then, the function $(\alpha_0, \alpha_1) \ni\eta_3\mapsto\eta_2\in (0, \alpha_0)$ is a strictly decreasing function.
\end{lemma}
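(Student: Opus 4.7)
The plan is to apply implicit differentiation to the constraint (\ref{eq:3.12}) that the pair $(\eta_2,\eta_3)$ must satisfy. The map $\eta_3 \mapsto \eta_2$ is well-defined and smooth on $(\alpha_0,\alpha_1)$ via the explicit formula (\ref{eq:3.14}), since $A(\eta_3)>0$ on this range by Lemma \ref{lem:3.2}. Setting
\begin{align*}
F(\eta_2,\eta_3) := \eta_2^2+\eta_3^2+\eta_2\eta_3-4c(\eta_2+\eta_3)-16(\omega-c^2/4)
\end{align*}
and differentiating $F(\eta_2(\eta_3),\eta_3)=0$ yields
\begin{align*}
\frac{d\eta_2}{d\eta_3} = -\frac{2\eta_3+\eta_2-4c}{2\eta_2+\eta_3-4c}.
\end{align*}

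The key simplification is the Vieta relation (\ref{eq:3.11}), namely $4c = \eta_1+\eta_2+\eta_3$. Substituting it collapses the numerator to $\eta_3-\eta_1$ and the denominator to $\eta_2-\eta_1$. Since $\eta_1<0<\eta_2<\eta_3$ as recorded in Section \ref{sec:3.1}, both quantities are strictly positive, and hence
\begin{align*}
\frac{d\eta_2}{d\eta_3} = -\frac{\eta_3-\eta_1}{\eta_2-\eta_1} < 0
\end{align*}
throughout $(\alpha_0,\alpha_1)$, which gives the desired strict monotonic decrease.

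To close the argument I would verify the codomain by examining the endpoints from Section \ref{sec:3.1}: case (ii) forces $\eta_2 \to 0$ as $\eta_3\to\alpha_1$, while the degenerate case (i) gives $\eta_2\to\alpha_0$ as $\eta_3\downarrow\alpha_0$. Combined with the strict decrease and continuity just established, this identifies the image of $(\alpha_0,\alpha_1)$ as $(0,\alpha_0)$. There is no real obstacle here: the computation is essentially one line once Vieta is invoked, and the only hypothesis of the implicit function theorem to check is $\eta_2-\eta_1\neq 0$, which is automatic from $\eta_1<\eta_2$.
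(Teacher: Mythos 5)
Your proof is correct, and it takes a genuinely different route from the paper's. The paper works in the rescaled variables $\xi=\eta_2/4\sqrt{\omega}$, $\eta=\eta_3/4\sqrt{\omega}$, writes $\xi=\tfrac{1}{2}\bigl(-\eta+2s+\sqrt{f_s(\eta)}\bigr)$ explicitly from (\ref{eq:3.14}), differentiates this radical expression, and bounds the result using $\tfrac{2s}{3}<\beta_0<\eta$ to get $\tfrac{d\xi}{d\eta}<-\tfrac12$. You instead differentiate the constraint (\ref{eq:3.12}) implicitly and then invoke the Vieta relation $4c=\eta_1+\eta_2+\eta_3$, which collapses the quotient to the clean formula $\tfrac{d\eta_2}{d\eta_3}=-\tfrac{\eta_3-\eta_1}{\eta_2-\eta_1}$; this avoids differentiating the square root entirely and in fact yields the slightly stronger bound $\tfrac{d\eta_2}{d\eta_3}<-1$ (since $\eta_3>\eta_2$), whereas the paper's normalization buys uniformity in $\omega$ and reuses the machinery ($f_s$, $\beta_0$, $\beta_1$) needed later in Appendices \ref{sec:A} and \ref{sec:B}. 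One small point worth making explicit: since the positivity of $\eta_2$ is part of what the lemma asserts (the codomain $(0,\alpha_0)$), it is cleaner not to cite the full ordering $\eta_1<0<\eta_2<\eta_3$ when fixing the signs, but rather to note that $\eta_2-\eta_1=\sqrt{A}>0$ directly from (\ref{eq:3.13})--(\ref{eq:3.14}) and Lemma \ref{lem:3.2}, and that $\eta_3-\eta_1=\tfrac{1}{2}(3\eta_3-4c+\sqrt{A})>0$ because $\eta_3>\alpha_0>\tfrac{4c}{3}$; with that substitution your argument is self-contained and the endpoint identification of the image goes through as you describe.
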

\begin{proof}
It is enough to prove that the function $(\beta_0, \beta_1) \ni\eta\mapsto\xi\in (0, \beta_0)$ is a strictly decreasing function. From (\ref{eq:3.14}) and (\ref{eq:3.22}), we have
\begin{align*}
\xi =\frac{1}{2}\l( -\eta +2s+\sqrt{f_s(\eta )}\r) .
\end{align*}
For $-1 <s\leq 1$ and $\beta_0 <\eta <\beta_1$, we have
\begin{align*}
\frac{d\xi}{d\eta}
&=-\frac{1}{2}-\frac{1}{4\sqrt{f_s(\eta )}}(6\eta -4s) \\
&<-\frac{1}{2}-\frac{1}{2\sqrt{f_s(\eta )}} \l( 3\cdot \frac{2s}{3} -2s\r) =-\frac{1}{2}<0,
\end{align*}
where we used the following inequality:
\begin{align*}
\frac{2s}{3} <\frac{2s+\sqrt{3+s^2}}{3}=\beta_0 <\eta .
\end{align*}
This completes the proof.
\end{proof}
%%%%%%%%%%%%%%%%%%%%%
%%%%%%%%%%%%%%%%%%%%%
Next, we discuss the change of the parameters when we take the limit $\eta_{3} \to \alpha_1$ (or equivalently $\eta\to\beta_1$). We begin with the following lemma.
\begin{lemma}
\label{lem:3.4}
Let $(\omega ,c)$ satisfy \textup{(\ref{WC})}. Then, we have
\begin{align}
\label{eq:3.24}
\eta_1 \to -4\sqrt{\omega}+2c , ~\eta_2 \to 0,~\frac{1}{2g} \to \frac{\sqrt{4\omega -c^2}}{2}
\end{align}
as $\eta_3\to\alpha_1$.
\end{lemma}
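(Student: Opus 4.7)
The proof is a direct computation built on the explicit formulas for $\eta_1, \eta_2$ in \textup{(\ref{eq:3.13})}, \textup{(\ref{eq:3.14})} and for $g$ in \textup{(\ref{eq:3.7})}. My plan is to first evaluate the discriminant-type quantity $A(\eta_3) = 64\omega - 3\eta_3^2 + 8c\eta_3$ at the endpoint $\eta_3 = \alpha_1 = 4\sqrt{\omega} + 2c$, and then pass to the limit in each expression by continuity.

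The key algebraic identity to establish is
\begin{align*}
A(\alpha_1) = 4\bigl(2\sqrt{\omega} - c\bigr)^2.
\end{align*}
This follows from expanding $64\omega - 3(4\sqrt{\omega}+2c)^2 + 8c(4\sqrt{\omega}+2c)$ and collecting terms. Since $(\omega,c)$ satisfies \textup{(\ref{WC})}, we have $\omega \ge c^2/4$, so $2\sqrt{\omega} \ge |c| \ge c$; hence $\sqrt{A(\alpha_1)} = 2(2\sqrt{\omega} - c) \ge 0$ without ambiguity of sign. (Note that in the massless case $\omega = c^2/4$, $c>0$, this limit is $0$, which is consistent with the other limits degenerating as well.)

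Substituting $\eta_3 = \alpha_1$ and $\sqrt{A(\alpha_1)} = 2(2\sqrt{\omega} - c)$ into \textup{(\ref{eq:3.13})} and \textup{(\ref{eq:3.14})} gives, by direct simplification,
\begin{align*}
\eta_1 \to \frac{-(4\sqrt{\omega}+2c) + 4c - 2(2\sqrt{\omega}-c)}{2} = -4\sqrt{\omega} + 2c, \qquad \eta_2 \to \frac{-(4\sqrt{\omega}+2c) + 4c + 2(2\sqrt{\omega}-c)}{2} = 0,
\end{align*}
as $\eta_3 \to \alpha_1$. For the third limit, I would use the observation $\eta_2 - \eta_1 = \sqrt{A(\eta_3)}$ to rewrite
\begin{align*}
\frac{1}{2g} = \frac{\sqrt{\eta_3(\eta_2 - \eta_1)}}{4} = \frac{\sqrt{\eta_3 \sqrt{A(\eta_3)}}}{4},
\end{align*}
and then compute
\begin{align*}
\alpha_1 \sqrt{A(\alpha_1)} = (4\sqrt{\omega} + 2c) \cdot 2(2\sqrt{\omega} - c) = 4(2\sqrt{\omega}+c)(2\sqrt{\omega}-c) = 4(4\omega - c^2),
\end{align*}
which yields $\tfrac{1}{2g} \to \tfrac{1}{4}\sqrt{4(4\omega - c^2)} = \tfrac{\sqrt{4\omega - c^2}}{2}$, as claimed.

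There is no real obstacle here, only bookkeeping: the one point requiring the hypothesis \textup{(\ref{WC})} is the positivity $2\sqrt{\omega} - c \ge 0$ needed to choose the correct branch of $\sqrt{A(\alpha_1)}$. Continuity of $A$, of the roots $\eta_1,\eta_2$ as functions of $\eta_3$ through \textup{(\ref{eq:3.13})}--\textup{(\ref{eq:3.14})}, and of $g$ through \textup{(\ref{eq:3.7})} (all valid since $A(\eta_3)>0$ throughout $\eta_3 \in (\alpha_0,\alpha_1]$ by Lemma~\ref{lem:3.2}) then makes the passage to the limit immediate.
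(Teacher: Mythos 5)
Your proposal is correct and follows essentially the same route as the paper: evaluate $A(\alpha_1)=(4\sqrt{\omega}-2c)^2=4(2\sqrt{\omega}-c)^2$, take the nonnegative square root using \textup{(\ref{WC})}, and substitute into \textup{(\ref{eq:3.13})}, \textup{(\ref{eq:3.14})} and \textup{(\ref{eq:3.7})}. The only cosmetic difference is that you compute the limit of $1/(2g)$ directly from $\eta_3\sqrt{A(\eta_3)}$ while the paper first records the limits of $\eta_2$ and $\eta_3$ and then passes to $g$; both are equivalent.
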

\begin{proof}
We note that
\begin{align*}
A(\alpha_1 ) &=64\omega -3\alpha_1 +8c\alpha_1 \\
&=(4\sqrt{\omega}-2c)^2 .
\end{align*}
From the expressions (\ref{eq:3.13}) and (\ref{eq:3.14}), we have
\begin{align*}
\lim_{\eta_3\to\alpha_1}\eta_1 &= \frac{-\alpha_1 +4c-\sqrt{A(\alpha_1 )}}{2}\\
&=\frac{-4\sqrt{\omega}+2c -\l( 4\sqrt{\omega}-2c\r)}{2} =-4\sqrt{\omega}+2c,\\
\lim_{\eta_3\to\alpha_1}\eta_2 &= \frac{-\alpha_1 +4c+\sqrt{A(\alpha_1 )}}{2}\\
&=\frac{-4\sqrt{\omega}+2c +\l( 4\sqrt{\omega}-2c\r)}{2} =0.
\end{align*}
Note that the limit of $\eta_2$ compatible with the definition of $\alpha_1$. From the expression (\ref{eq:3.7}) and the limits of $\eta_2$ and $\eta_3$, we have
\begin{align*}
\lim_{\eta_3\to\alpha_1}\frac{1}{2g} &=\lim_{\eta_3\to\alpha_1}\frac{ \sqrt{\eta_3 (\eta_2 -\eta_1)} }{4} \\
&=\frac{ \sqrt{ (4\sqrt{\omega}+2c)(4\sqrt{\omega}-2c) } }{4}=\frac{ \sqrt{4\omega -c^2} }{2}.
\end{align*}
This completes the proof.
\end{proof}
It is more delicate to calculate the limit of modulus $k$ of elliptic functions as $\eta_3\to\alpha_1$. First, we rewrite $k^2$ defined by (\ref{eq:3.6}) as a function of $\eta$. From (\ref{eq:3.13}) and (\ref{eq:3.14}), we have
\begin{align*}
\eta_3 (\eta_2 -\eta_1 )&=4\sqrt{\omega}\eta\cdot\sqrt{A}=16\omega \eta \sqrt{f_s(\eta )}.
\end{align*}
Since
\begin{align*}
\eta_3 -\eta_2 =\frac{3\eta_3 -4c-\sqrt{A}}{2},
\end{align*}
we have
\begin{align*}
4\l( -\eta_1 (\eta_3-\eta_2 )\r) &=(3\eta_3 -4c-\sqrt{A}) (\eta_3 -4c+\sqrt{A}) \\
&=32\omega \l( 3\eta^2 +(\sqrt{f_s(\eta )}-6s)\eta +2(s^2-1)\r) .
\end{align*}
Hence, we have the expression of $k^2$ as
\begin{align}
\label{eq:3.25}
k^2 &=\frac{-\eta_{1}(\eta_3-\eta_2 )}{\eta_3 (\eta_2 -\eta_1 )} \\
&=\frac{3\eta^2 +(\sqrt{f_s(\eta )}-6s)\eta +2(s^2 -1)}{2\eta\sqrt{f_s(\eta )}}.\notag 
\end{align}
for $\beta_0<\eta <\beta_1$ and $-1<s\leq 1$. By using the expression of (\ref{eq:3.25}), we can prove the monotonicity of modulus $k$ of elliptic functions.
\begin{proposition}
\label{prop:3.5}
Let $(\omega ,c)$ satisfy \textup{(\ref{WC})}. Then, the function $(\beta_0 ,\beta_1) \ni\eta\mapsto k(\eta) \in (0,1)$ is a strictly increasing function.
\end{proposition}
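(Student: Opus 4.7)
The plan is to reduce strict monotonicity to positivity of an explicit cubic polynomial. First, I would split off the $\eta\sqrt{f_s(\eta)}$ term in the numerator of~(\ref{eq:3.25}) to obtain the cleaner decomposition
\[
k^2 = \frac{1}{2} + g(\eta), \qquad g(\eta) := \frac{3\eta^2 - 6s\eta + 2(s^2 - 1)}{2\eta\sqrt{f_s(\eta)}}.
\]
Since $k>0$, it suffices to show $g$ is strictly increasing on $(\beta_0,\beta_1)$. A direct differentiation of $g$, followed by clearing $\sqrt{f_s(\eta)}$ from the resulting expression, shows that the sign of $g'(\eta)$ coincides with that of
\[
\widetilde N(\eta) := -3s\eta^3 + 6s^2\eta^2 + 3s(1-s^2)\eta + 2(1-s^2).
\]
Hence the proof reduces to verifying $\widetilde N(\eta)>0$ on $(\beta_0,\beta_1)$ for each $-1<s\le 1$.

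The crucial structural observation I would exploit is the identity
\[
\widetilde N'(\eta) = 3s\bigl[-3\eta^2 + 4s\eta + (1-s^2)\bigr],
\]
whose unique positive root is precisely $\beta_0 = (2s + \sqrt{3+s^2})/3$. Thus $\beta_0$ is always a critical point of $\widetilde N$, and the sign of the leading coefficient $-3s$ together with this observation completely determines the monotonicity of $\widetilde N$ on $(\beta_0,\infty)$: for $s>0$, $\widetilde N$ has a local maximum at $\beta_0$ and is strictly decreasing on $(\beta_0,\infty)$; for $s<0$, $\widetilde N$ has a local minimum at $\beta_0$ and is strictly increasing on $(\beta_0,\infty)$; for $s=0$, $\widetilde N\equiv 2$ is constant.

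To close the argument, I would evaluate $\widetilde N$ at the relevant endpoint. A quick substitution using $\beta_1 = 1+s$ gives $\widetilde N(\beta_1) = 2(1-s^2)$, which settles $0<s<1$ (the function decreases from $\beta_0$ to a strictly positive value at $\beta_1$). The borderline subcase $s=1$ is handled separately: there $\widetilde N(\eta) = 3\eta^2(2-\eta)$ is manifestly strictly positive on $(\beta_0,\beta_1) = (4/3,2)$. For the remaining case $-1<s<0$, where $\widetilde N$ is increasing on $(\beta_0,\infty)$, what is required is $\widetilde N(\beta_0)>0$. Setting $\sigma := \sqrt{3+s^2}$, a careful simplification of the cubic evaluated at $\beta_0$ yields
\[
\widetilde N(\beta_0) = \frac{2}{9}\bigl(9 + s\sigma^3 - s^4\bigr);
\]
writing $t = -s\in (0,1)$, positivity reduces to the elementary inequality $t(3+t^2)^{3/2} + t^4 < 9$, which follows immediately from $(3+t^2)^{3/2}<8$ and $t^4<1$. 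The main technical obstacle is this last endpoint simplification at $\beta_0$, which requires expanding $\beta_0^2$ and $\beta_0^3$ and cancelling many $\sigma$-terms before a tractable form emerges; everything else amounts to a routine differentiation and bookkeeping of cases.
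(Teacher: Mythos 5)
Your proof is correct and follows essentially the same route as the paper's Appendix A: your cubic $\widetilde N(\eta)$ is exactly half of the paper's quantity $6s\eta g_s(\eta)+4(1-s^2)$ in (\ref{eq:A.2}), your observation that $\beta_0$ is a critical point of $\widetilde N$ is the paper's observation that $\beta_0$ maximizes $\eta\mapsto\eta g_s(\eta)$, and your value $\widetilde N(\beta_0)=\tfrac{2}{9}(9+s\sigma^3-s^4)$ is $h(s)/2$ from (\ref{eq:A.4}). The only substantive difference is the final step for $-1<s<0$, where your elementary bound $t(3+t^2)^{3/2}+t^4<9$ is more self-contained than the paper's appeal to monotonicity of $h$ on $[-1,0]$.
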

We will prove Proposition \ref{prop:3.5} in Appendix \ref{sec:A}. The limits of $k$ and $\beta^2$ are given by the following lemma.
\begin{lemma}
\label{lem:3.6}
Let $(\omega ,c)$ satisfy \textup{(\ref{WC})}. Then, we have
\begin{align}
\label{eq:3.26}
&k \to
\l\{
\begin{array}{ll}
\ds 1 &  \text{if}~ \omega >c^2/4,\\[3pt]
\ds \frac{1}{\sqrt{2}} & \text{if}~ \omega =c^2/4 ~\text{and}~ c>0,
\end{array}
\r.
\\[3pt]
\label{eq:3.27}
&\beta^2\to
\l\{
\begin{array}{ll}
\ds \frac{2\sqrt{\omega}+c}{2\sqrt{\omega}-c} &  \text{if}~ \omega >c^2/4,\\[7pt]
\ds \infty & \text{if}~ \omega =c^2/4 ~\text{and}~ c>0.
\end{array}
\r.
\end{align}
as $\eta_3\to\alpha_1$.
\end{lemma}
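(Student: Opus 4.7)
The plan is to compute the limits directly, using the explicit expressions already derived, but splitting into the two cases indicated in the statement because the massless case produces indeterminate forms that a naive substitution cannot resolve.

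\textbf{Case $\omega > c^2/4$.} Here $s<1$, so $\eta_1\to 2c-4\sqrt{\omega}\neq 0$ by Lemma \ref{lem:3.4}, and every factor in $k^2=\dfrac{-\eta_1(\eta_3-\eta_2)}{\eta_3(\eta_2-\eta_1)}$ has a nonzero limit. Substituting $\eta_1\to 2c-4\sqrt{\omega}$, $\eta_2\to 0$, $\eta_3\to 4\sqrt{\omega}+2c$ from Lemma \ref{lem:3.4}, the $(4\sqrt{\omega}+2c)$ factors cancel and one reads off $k^2\to 1$. Feeding this into $\beta^2=-\eta_3 k^2/\eta_1$ gives $\beta^2\to (4\sqrt{\omega}+2c)/(4\sqrt{\omega}-2c)=(2\sqrt{\omega}+c)/(2\sqrt{\omega}-c)$.

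\textbf{Case $\omega=c^2/4$, $c>0$.} This is where the work lies: Lemma \ref{lem:3.4} now gives $\eta_1\to 0$ as well, so both numerator and denominator of $k^2$ tend to $0$ and $\beta^2$ is $0/0$. I would work with the normalized form (\ref{eq:3.25}) at $s=1$, where $\beta_1=2$ and $f_1(\eta)=-3\eta^2+4\eta+4=(2-\eta)(3\eta+2)$ vanishes at $\eta=2$. Setting $\eta=2-\delta$ with $\delta\downarrow 0$, I would expand
\begin{align*}
f_1(2-\delta)=8\delta-3\delta^2, \qquad \sqrt{f_1(2-\delta)}=2\sqrt{2\delta}+O(\delta^{3/2}),
\end{align*}
and insert this into the numerator $3\eta^2+(\sqrt{f_s(\eta)}-6s)\eta+2(s^2-1)$ and denominator $2\eta\sqrt{f_s(\eta)}$ of (\ref{eq:3.25}). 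The constant and linear-in-$\delta$ terms of the numerator cancel (since the numerator vanishes at $\delta=0$), leaving the leading $\sqrt{\delta}$ terms to give
\begin{align*}
\text{numerator}=2\sqrt{8\delta}+O(\delta), \qquad \text{denominator}=4\sqrt{8\delta}+O(\delta).
\end{align*}
Thus $k^2\to 1/2$, i.e.\ $k\to 1/\sqrt{2}$. For $\beta^2$, I note that $\eta_1<0$ on the admissible range and $\eta_1\uparrow 0$, while $\eta_3\to 4c>0$ and $k^2\to 1/2$, so $\beta^2=-\eta_3 k^2/\eta_1\to +\infty$.

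The main obstacle is the $0/0$ cancellation in the massless case; the crucial point is that the numerator of (\ref{eq:3.25}) loses both its constant term (since $s^2-1=0$) and its linear-in-$(2-\eta)$ contribution, so one must keep the $\sqrt{f_s(\eta)}$ terms to get a finite, nonzero ratio. The rest of the limits then follow as routine substitutions from Lemma \ref{lem:3.4}.
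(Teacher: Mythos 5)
Your proposal is correct and follows essentially the same route as the paper: direct substitution of the limits from Lemma \ref{lem:3.4} in the case $\omega>c^2/4$, and resolution of the $0/0$ form via the normalized expression (\ref{eq:3.25}) at $s=1$ together with the factorization $f_1(\eta)=(2-\eta)(3\eta+2)$ in the massless case. The only cosmetic difference is that the paper cancels the overall factor of $\eta$ in the numerator to obtain the exact identity $k^2=\tfrac12-\tfrac{3\sqrt{2-\eta}}{2\sqrt{3\eta+2}}$ rather than expanding in $\delta=2-\eta$, but both yield the same limit.
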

%%%%
\begin{proof}
Case 1: $\omega >c^2/4$. By Lemma \ref{lem:3.4} we note that
\begin{align*}
\eta_1 \to -4\sqrt{\omega}+2c <0, \quad \eta_2 \to 0
\end{align*}
as $\eta_3 \to \alpha_1$. From the definitions of $k^2$ and $\beta$, we obtain
\begin{align*}
\lim_{\eta_3\to\alpha_1}k^2 &=\lim_{\eta_3\to\alpha_1}\frac{-\eta_{1}(\eta_3-\eta_2 )}{\eta_3 (\eta_2 -\eta_1 )} \\
&=\frac{(4\sqrt{\omega}-2c)\cdot \alpha_1}{\alpha_1 (4\sqrt{\omega}-2c) }=1, 
\end{align*}
and
\begin{align*}
\lim_{\eta_3\to\alpha_1} \beta^2 &=\lim_{\eta_3\to\alpha_1} -\frac{\eta_3 k^2}{\eta_1}\\
&=\frac{2\sqrt{\omega}+c}{2\sqrt{\omega}-c} .
\end{align*}
Case 2: $\omega =c^2/4$ and $c>0$. Since in this case
\begin{align*}
\eta_1 \to -4\sqrt{\omega}+2c=0, \quad \eta_2 \to 0
\end{align*}
as $\eta_3 \to \alpha_1$, the above calculation does not work. Since $s=1$ in this case, from the expression (\ref{eq:3.25}) we have
\begin{align}
\label{eq:3.28}
k^2(\eta )&=\frac{3\eta +(\sqrt{f_1(\eta )}-6)}{2\sqrt{f_1(\eta )}}\\
&=\frac{3(\eta -2)}{2\sqrt{f_1(\eta )}} +\frac{1}{2} ,\notag
\end{align}
where $\beta_0 (1) <\eta <\beta_1 (1)=2$. Note that
\begin{align}
\label{eq:3.29}
f_1(\eta )=-(3\eta^2 -4\eta -4)=-(3\eta +2 ) (\eta -2) .
\end{align}
From (\ref{eq:3.28}) and (\ref{eq:3.29}), we deduce that
\begin{align*}
k^2(\eta )&=-\frac{3(2-\eta )}{2\sqrt{(2-\eta )(3\eta +2)}} +\frac{1}{2}\\
&=-\frac{3\sqrt{2-\eta}}{2\sqrt{3\eta +2}}+\frac{1}{2} \\
&\longrightarrow \frac{1}{2} 
\end{align*}
as $\eta\to \beta_1$. This gives \eqref{eq:3.26} for the massless case. 
For the limit of $\beta^2$, since $\eta_1 \to 0$ as $\eta_3\to\alpha_1$, we have
\begin{align*}
\lim_{\eta_3\to\alpha_1} \beta^2 &=\lim_{\eta_3\to\alpha_1} -\frac{\eta_3 k^2}{\eta_1} =\infty .
\end{align*}
This completes the proof.
\end{proof}
%%%%%%%%%%%%%%%%
%%%%%%%%%%%%%%%%
% We rewrite the fundamental period $T_{\psi}$ as
% \begin{align*}
% T_{\psi}(\eta_3 ) =\frac{8}{ \sqrt{\eta_3 (\eta_2 -\eta_1 )} }K(k)=\frac{8}{ \sqrt{\eta_3\sqrt{A(\eta_3 )}} } K(k(\eta_3 ))
% \end{align*}
The fundamental period $T_{\psi}$ defined by (\ref{eq:3.9}) is rewritten as
\begin{align*}
T_{\psi}(\eta_3 ) =\frac{8}{ \sqrt{\eta_3\sqrt{A(\eta_3 )}} } K(k(\eta_3 ))
\end{align*}
for $\alpha_0 <\eta_3 <\alpha_1$. Combined with Proposition \ref{prop:3.5}, we can prove the monotonicity of the fundamental period $T_{\psi}$. 
\begin{proposition}
\label{prop:3.7}
Let $(\omega ,c)$ satisfy \textup{(\ref{WC})}. Then, the function $(\alpha_0 ,\alpha_1) \ni\eta_3 \mapsto T_{\psi}(\eta_3 ) \in (0,\infty)$ is a strictly increasing function.
\end{proposition}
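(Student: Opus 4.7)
My plan is to reduce to the case $\omega=1$ via the scaling $c=2s\sqrt{\omega}$, $\eta_3 = 4\sqrt{\omega}\,\eta$, already used in Section \ref{sec:3} for Proposition \ref{prop:3.5}. Under this normalization, the identity $\eta_3\sqrt{A(\eta_3)} = 16\omega\,\eta\sqrt{f_s(\eta)}$ derived just after \eqref{eq:3.22} gives
\[
T_\psi(\eta_3) \;=\; \frac{2}{\sqrt{\omega}}\cdot\frac{K(k(\eta))}{\sqrt{\eta\sqrt{f_s(\eta)}}},
\]
so it suffices to show that $T_s(\eta):=K(k(\eta))/\sqrt{\eta\sqrt{f_s(\eta)}}$ is strictly increasing on $(\beta_0(s),\beta_1(s))$ for each fixed $s\in(-1,1]$.

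Next I would take the logarithmic derivative,
\[
\frac{d}{d\eta}\log T_s(\eta) \;=\; \frac{K'(k)}{K(k)}\,\frac{dk}{d\eta} \;-\; \frac{1}{2}\frac{d}{d\eta}\log\bigl(\eta\sqrt{f_s(\eta)}\bigr),
\]
and show the right-hand side is positive. For the first term I would use the classical identity $K'(k) = (E(k) - (1-k^2)K(k))/(k(1-k^2))$, which is positive on $(0,1)$, together with the explicit positive formula for $dk/d\eta$ that emerges in the proof of Proposition \ref{prop:3.5} (Appendix \ref{sec:A}). The second term is elementary: using $f_s(\eta) = 4+4s\eta-3\eta^2$ one computes
\[
\frac{d}{d\eta}\log\bigl(\eta\sqrt{f_s(\eta)}\bigr) \;=\; \frac{1}{\eta} + \frac{2s-3\eta}{4+4s\eta-3\eta^2}.
\]
The remaining task is a direct comparison of these explicit expressions, which I expect to reduce to a polynomial inequality in $\eta$ (parametrized by $s$) that can be verified using the constraint $\beta_0(s)<\eta<\beta_1(s)$.

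The main obstacle is that the prefactor $\eta\sqrt{f_s(\eta)}$ is \emph{not} monotone on $(\beta_0,\beta_1)$ for all $s$: its derivative $4\eta(2+3s\eta-3\eta^2)$ can change sign inside the interval, so one cannot simply assert monotonicity of $T_s$ by multiplying two monotone factors. The key estimate will be to show that the growth of $K(k(\eta))$ (which blows up as $\eta\to\beta_1$ when $\omega>c^2/4$, via $k\to 1$) always dominates any possible decrease of the prefactor. In the massless case $s=1$ the structure is quite different: by Lemma \ref{lem:3.6} the modulus only tends to $1/\sqrt{2}$, so $K$ stays bounded and the divergence of $T_s$ at $\beta_1=2$ is produced entirely by the prefactor through $\eta_1\to 0$; here the inequality must be re-examined carefully. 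As a sanity check, the endpoint values $T_\psi(\alpha_0^+)=2L_0$ and $T_\psi(\alpha_1^-)=+\infty$ are consistent with the claimed strict monotonicity, so the proof amounts to ruling out any interior non-monotone behavior.
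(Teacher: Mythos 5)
Your plan is essentially the paper's own proof (Appendix~\ref{sec:B}): writing $T_\psi=\frac{2}{\sqrt{\omega}}\,b_s(\eta)^{-1/4}K(k(\eta))$ with $b_s=\eta^2f_s(\eta)$, differentiating, using $dk/d\eta>0$ from Proposition~\ref{prop:3.5}, and observing that the only delicate regime is where the prefactor decreases, i.e.\ where $a_s(\eta)=-3\eta^2+3s\eta+2>0$ --- exactly the factor $2+3s\eta-3\eta^2$ you isolate. The one caution is that the remaining comparison is not a purely polynomial inequality, since $\frac{dK}{dk}=\frac{1}{kk'^2}(E-k'^2K)$ brings the complete elliptic integrals $E$ and $K$ into the balance; this is precisely the subcase the paper itself only sketches, deferring to \cite{A07,AN09}, so to make your argument complete you would still need the quantitative estimates on $E-k'^2K$ from those references.
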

The proof of Proposition \ref{prop:3.7} will be given in Appendix \ref{sec:B}. From the definition (\ref{eq:3.6}) of $k$, we have
\begin{align*}
k^2 (\eta_3 ) \to 0
\end{align*}
as $\eta_3 \to\alpha_0$. Since $K(k)\to \frac{\pi}{2}$ as $k\to 0$, we have
\begin{align}
\label{eq:3.30}
T_{\psi} (\eta_3 ) \to \frac{4\pi}{ \sqrt{\alpha_0\sqrt{A(\alpha_0)}} } =:T_0(\omega ,c) 
\end{align}
as $\eta_3\to \alpha_0$. Note that $A(\alpha_0 )$ is a positive constant as described in Remark \ref{rem:3.1}. On the other hand, we have 
\begin{align}
\label{eq:3.31}
T_{\psi} (\eta_3 ) \to \infty
\end{align}
as $\eta_3 \to\alpha_1$. Indeed, when $\omega >c^2/4$, we have $k\to 1$ as $\eta_3\to\alpha_1$ by Lemma \ref{lem:3.6}. Since $K(k) \to \infty$ as $k\to 1$, (\ref{eq:3.31}) holds. When $\omega =c^2/4$ and $c>0$, we have $\eta_1, \eta_2 \to 0$ as $\eta_3 \to\alpha_1$ by Lemma \ref{lem:3.4}, and hence (\ref{eq:3.31}) holds from the definition (\ref{eq:3.9}) of $T_{\psi}$. Therefore, by (\ref{eq:3.30}), (\ref{eq:3.31}) and Proposition \ref{prop:3.7} we deduce that
\begin{align}
\label{eq:3.32}
\alpha_0 <\eta_3 <\alpha_1 \iff T_0(\omega ,c) <T_{\psi} (\eta_3 ) <\infty ,
\end{align}
and
\begin{align}
\label{eq:3.33}
\eta_3 \to \alpha_0 &\iff T_{\psi}(\eta_3 ) \to T_0 (\omega ,c), \\
\label{eq:3.34}
 \eta_3 \to \alpha_1 &\iff T_{\psi}(\eta_3 ) \to \infty .
\end{align}
% We note that by (\ref{eq:3.34}) the limit $\eta_3\to \alpha_1$ is equivalent to the long-period limit. 
The relation (\ref{eq:3.34}) means that the limit $\eta_3\to \alpha_1$ is equivalent to the long-period limit. Since $2L=T_{\psi}$, $L$ has the following constraint condition:
\begin{align}
\label{eq:3.35}
L_0(\omega ,c) <L <\infty ,
\end{align}
where $L_0(\omega ,c)$ is defined by
\begin{align}
\label{eq:3.36}
L_0 =L_0(\omega ,c):=\frac{T_0 (\omega ,c) }{2}=\frac{2\pi}{ \sqrt{\alpha_0 \sqrt{A(\alpha_0 )}} }.
\end{align}
%We note that the condition (%ref{cons}%) comes from that $\psi$ is the single-bump solution on $\T_{2L}$. 
Since by (\ref{eq:3.34}) we have
\begin{align*}
\eta_3 \to \alpha_1 \iff L\to\infty ,
\end{align*}
we can take the limit $\eta_3\to\alpha_1$ instead of the limit $L\to\infty$.
%%%%%%%%%%%%%%
%%%%%%%%%%%%%%

To clarify the dependence of parameters, we denote the function $\psi$ by $\psi_{\omega, c}^L$. Let $c_L \in \frac{2\pi}{L}\Z$. It is easily verified that the traveling wave
\begin{align*}
u_{\omega, c_L}=e^{i\omega t+ i\frac{c_L}{2}(x-c_Lt)} (\psi^L_{\omega, c_L})^{\frac{1}{2}}(x-c_L t)
\end{align*}
is a solution of the equation (\ref{DNLS1}). 
%%%%%%%%%%%%%%%
%%%%%%%%%%%%%%%
% \subsection{Proof of Theorem \ref{thm:1.1}}
\subsection{Pointwise convergence in the long-period limit}
We complete the proof of Theorem \ref{thm:1.1}. Fix any $x \in \R$ and consider a large $L>0$ such that $x \in [-L, L]$. We need to divide two cases to do calculations in the long-period limit.\\
Case 1: $\omega >c^2/4$. By Lemma \ref{lem:3.4}, Lemma \ref{lem:3.6} and extremal formulae (\ref{eq:2.3}) of elliptic functions, we have
% we obtain the long-period limit of $\psi_{\omega,c}^L(x)$ as 
\begin{align*}
\lim_{L\to\infty} \psi^L_{\omega ,c} (x)&= \lim_{\eta_3 \to \alpha_1} \eta_3 \l[\frac{\mathrm{dn}^2 \l( \frac{x}{2g} ; k\r)}{1+ \beta^2 \mathrm{sn}^2 \l( \frac{x}{2g} ; k\r)} \r] \\
&= (4\sqrt{\omega}+2c) \l[ \frac{ \mathrm{sech}^2\l( \frac{\sqrt{4\omega -c^2}}{2}x\r) }{1+\frac{2\sqrt{\omega}+c}{2\sqrt{\omega}-c} \tanh^2 \l( \frac{\sqrt{4\omega -c^2}}{2}x\r) } \r] \\
&= \frac{2(4\omega -c^2)}{(2\sqrt{\omega}-c) \cosh^2 \l( \frac{\sqrt{4\omega -c^2}}{2}x\r) +(2\sqrt{\omega}+c)\sinh^2 \l( \frac{\sqrt{4\omega -c^2}}{2}x\r)} \\[2pt]
&=\frac{ 2(4\omega -c^2) }{2\sqrt{\omega} \cosh^2 (\sqrt{4\omega -c^2}x) -c } = \Phi_{\omega ,c}^2 (x) .
\end{align*}
Case 2: $\omega =c^2/4$ and $c>0$. Since in this case $\beta\to\infty,~\frac{1}{2g} \to 0$ as $\eta_3\to\alpha_1$, we need to calculate more carefully. We use the following relations
\begin{align}
\label{eq:3.37}
\mathrm{dn} (u;k) &=1 +O(u^2), \\
\label{eq:3.38}
\mathrm{sn} (u;k) &=u+O(u^3)
\end{align}
as $u\to 0$ (see, e.g., \cite{L89} in detail). From (\ref{eq:3.37}), we have
\begin{align}
\label{eq:3.39}
\lim_{\eta_3\to\alpha_1} \mathrm{dn} \l( \frac{x}{2g}; k\r) =1 .
\end{align}
We note that 
\begin{align}
\label{eq:3.40}
\frac{1}{4g^2} &=\frac{ \eta_3(\eta_2 -\eta_1 )}{16}\\
&=\eta\omega\sqrt{f_1(\eta )} \notag\\
&=\eta\omega\sqrt{(2-\eta )(3\eta +2)}, \notag
\end{align}
where in the last equality we used the identity (\ref{eq:3.29}). We can rewrite $\eta_1$ as
\begin{align}
\label{eq:3.41}
-\eta_1 &= \frac{ \eta_3 -4c +\sqrt{A} }{2}\\
&=2\sqrt{\omega}\sqrt{2-\eta} \l( \sqrt{3\eta +2} -\sqrt{2-\eta}\r) . \notag
\end{align}
From (\ref{eq:3.40}) and (\ref{eq:3.41}), we have
\begin{align}
\label{eq:3.42}
\beta^2 \cdot \frac{1}{4g^2} &=-\frac{\eta_3}{\eta_1}k^2 \cdot \frac{1}{4g^2}\\
&= \frac{ 2\eta^2\omega\sqrt{3\eta +2} }{ \sqrt{3\eta +2} -\sqrt{2-\eta} }\cdot k^2 \notag
\end{align}
By (\ref{eq:3.38}), (\ref{eq:3.42}) and (\ref{eq:3.26}), we obtain that 
\begin{align*}
\lim_{\eta_3 \to\alpha_1} \beta^2\sn^2 \l(\frac{x}{2g} ;k\r) 
&=\lim_{\eta \to 2} \beta^2 \l( \frac{x^2}{4g^2} +O\l( 2-\eta \r) \r)\\
&= \lim_{\eta \to 2} \l[ \frac{ 2\eta^2\omega\sqrt{3\eta +2} }{ \sqrt{3\eta +2} -\sqrt{2-\eta} }\cdot k^2x^2 +O\l( \sqrt{2-\eta}\r) \r] \\
&=4\omega x^2 =(cx)^2. 
\end{align*}
Hence, we deduce that 
\begin{align*}
\lim_{L\to\infty} \psi^L_{c^2/4,c} (x)&= \lim_{\eta_3 \to \alpha_1} \eta_3 \l[\frac{\mathrm{dn}^2 \l( \frac{x}{2g} ; k\r)}{1+ \beta^2 \mathrm{sn}^2 \l( \frac{x}{2g} ; k\r)} \r] \\
&=\frac{4\sqrt{\omega} +2c}{ 1+(cx)^2 } \\
&= \frac{4c}{ 1+(cx)^2 } =\Phi^2_{c^2/4,c} (x) .
\end{align*}
This completes the proof of Theorem \ref{thm:1.1}. 
$\hfill\square$%証明終了マーク追加
%%%%%%%%%%%%%%%%%%
%%%%%%%%%%%%%%%%%%
\section{Long-period limit procedure}
\label{sec:4}
\subsection{$L^2$-convergence} 
% Fix $(\omega ,c)$ satisfying (\ref{WC}). 
First, we discuss the convergence of the mass $\|\Phi_{\omega ,c}^L \|_{L^2(\T_{2L})}^2$ in the long-period limit. We recall that the mass of the soliton on the whole line is given by
\begin{align}
\label{eq:4.1}
\| \Phi_{\omega ,c} \|_{L^2(\R )}^2= 8\tan^{-1} \sqrt{ \frac{2\sqrt{\omega}+c}{2\sqrt{\omega}-c} }.
\end{align}
%For the proof of (\ref{eq:4.1}), we refer to \cite{CO06}. 
% From the above argument, we could prove the following theorem.
%We have the following theorem.
Our main purpose in this subsection is to prove the following theorem.
\begin{theorem}
\label{thm:4.1}
Let $(\omega ,c)$ satisfy \textup{(\ref{WC})}. Then, we have
\begin{align}
\lim_{L\to \infty}\| \Phi^L_{\omega ,c} \|_{L^2(\T_{2L})}^2 = \| \Phi_{\omega ,c} \|_{L^2(\R )}^2 .
\end{align}
\end{theorem}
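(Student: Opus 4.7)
The plan is to recast $\|\Phi^L_{\omega,c}\|_{L^2(\T_{2L})}^2$ as a single explicit elliptic-type integral and then carry out the limit $\eta_3\uparrow\alpha_1$ (equivalent to $L\to\infty$ by \eqref{eq:3.34}). From the formula \eqref{eq:1.41} and the even symmetry of $\Phi^L_{\omega,c}$, the substitution $u=x/(2g)$ combined with $2L=4gK(k)$ from \eqref{eq:3.10} yields
\begin{equation*}
\|\Phi^L_{\omega,c}\|_{L^2(\T_{2L})}^2 = 4g\eta_3 \int_0^K \frac{\dn^2(u;k)}{1+\beta^2 \sn^2(u;k)}\,du,
\end{equation*}
and a further change of variables $\sin\theta=\sn(u;k)$ (so $du=d\theta/\sqrt{1-k^2\sin^2\theta}$) rewrites this as
\begin{equation*}
I_L := \|\Phi^L_{\omega,c}\|_{L^2(\T_{2L})}^2 = 4g\eta_3 \int_0^{\pi/2}\frac{\sqrt{1-k^2\sin^2\theta}}{1+\beta^2\sin^2\theta}\,d\theta .
\end{equation*}

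For the case $\omega>c^2/4$ everything converges regularly. Lemmas~\ref{lem:3.4} and~\ref{lem:3.6} give $4g\eta_3\to 8\beta_*$, where $\beta_*:=\sqrt{(2\sqrt{\omega}+c)/(2\sqrt{\omega}-c)}$, along with $k\to 1$ and $\beta\to\beta_*$. Since the integrand is uniformly bounded by $1$, dominated convergence yields
\begin{equation*}
\int_0^{\pi/2}\frac{\sqrt{1-k^2\sin^2\theta}}{1+\beta^2\sin^2\theta}\,d\theta \longrightarrow \int_0^{\pi/2}\frac{\cos\theta}{1+\beta_*^2\sin^2\theta}\,d\theta = \frac{1}{\beta_*}\arctan\beta_*,
\end{equation*}
hence $I_L\to 8\arctan\beta_* = \|\Phi_{\omega,c}\|_{L^2(\R)}^2$ by \eqref{eq:4.1}.

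The massless case $\omega=c^2/4$, $c>0$ is the main obstacle. Here $\beta\to\infty$, $g\to\infty$, and $\eta_1,\eta_2\to 0$ simultaneously, so the prefactor diverges while the $\theta$-integrand concentrates at $\theta=0$. The key step is the precise asymptotic $\eta_2\sim -\eta_1\sim \tfrac{1}{2}\sqrt{|A'(\alpha_1)|(\alpha_1-\eta_3)}$, obtained from a Taylor expansion of $A$ at $\alpha_1$ (using $A(\alpha_1)=0$ and the easily verified $A'(\alpha_1)\neq 0$); combined with \eqref{eq:3.7} and the definition $\beta^2=-\eta_3 k^2/\eta_1$, this forces the matching relation $4g\eta_3/\beta\to 8$. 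For the integral itself, I would use the classical identity $\int_0^{\pi/2}(1+\beta^2\sin^2\theta)^{-1}d\theta=\pi/(2\sqrt{1+\beta^2})$ together with the algebraic decomposition
\begin{equation*}
\frac{\sqrt{1-k^2\sin^2\theta}}{1+\beta^2\sin^2\theta} = \frac{1}{1+\beta^2\sin^2\theta} - \frac{k^2\sin^2\theta}{(1+\beta^2\sin^2\theta)(1+\sqrt{1-k^2\sin^2\theta})}
\end{equation*}
to conclude that the $\theta$-integral equals $\pi/(2\beta)+O(1/\beta^2)$ (the remainder being $O(1/\beta^2)$ because $\sin^2\theta/(1+\beta^2\sin^2\theta)\le \min(\sin^2\theta,1/\beta^2)$). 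Multiplying by the prefactor gives $I_L\to 8\cdot\pi/2=4\pi$, which is $\|\Phi_{\omega,c}\|_{L^2(\R)}^2$ by \eqref{eq:4.1}. The hard part is precisely this delicate balancing of divergent and vanishing quantities in the massless case; the non-massless case is essentially a one-line dominated convergence argument once the explicit integral representation is in hand.
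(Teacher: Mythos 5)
Your proof is correct, and it takes a genuinely different route through the key computation. Both arguments start from the same integral representation $4g\eta_3\int_0^{K}\dn^2(u;k)\,(1+\beta^2\sn^2(u;k))^{-1}du$, but the paper then invokes the tabulated formula 410.04 of Byrd--Friedman, reducing everything to the combination $G(\mu,k)=K(E(\mu,k')-F(\mu,k'))+E\,F(\mu,k')$ with $\mu=\sin^{-1}\sqrt{\beta^2/(\beta^2+k^2)}$; the case $\omega>c^2/4$ then hinges on the delicate cancellation $K(k)(E(\mu,k')-F(\mu,k'))\to 0$, proved via Taylor expansion of the incomplete integrals against the logarithmic blow-up $K(k)\sim\log(4/k')$, and the massless case is finished with the Legendre relation. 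You instead substitute $\sin\theta=\sn(u;k)$ to land on the elementary trigonometric integral $\int_0^{\pi/2}\sqrt{1-k^2\sin^2\theta}\,(1+\beta^2\sin^2\theta)^{-1}d\theta$, after which the non-massless case is a one-line dominated convergence argument (integrand bounded by $1$, limit $\beta_*^{-1}\arctan\beta_*$) and the massless case follows from the exact identity $\int_0^{\pi/2}(1+\beta^2\sin^2\theta)^{-1}d\theta=\pi/(2\sqrt{1+\beta^2})$ plus an $O(1/\beta^2)$ remainder bound. All the individual claims check out: the algebraic decomposition of $\sqrt{1-k^2\sin^2\theta}$ is an identity, the remainder estimate $\sin^2\theta/(1+\beta^2\sin^2\theta)\le 1/\beta^2$ suffices, and your asymptotic $\eta_2\sim-\eta_1\sim\frac12\sqrt{|A'(\alpha_1)|(\alpha_1-\eta_3)}$ correctly yields $4g\eta_3/\beta\to 8$ (the paper gets the same limit from the exact identity $4g\eta_3/\beta=8\sqrt{\eta_3/(\eta_3-\eta_2)}$, which you could use to shorten that step, since $\beta^2=(\eta_3-\eta_2)/(\eta_2-\eta_1)$ exactly). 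What your route buys is self-containedness and the avoidance of the $k'^2\log(1/k')$ cancellation; what the paper's buys is that the same formula 410.04 handles both cases in a uniform notation tied to the complete elliptic integrals used elsewhere.
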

%%%%%
\begin{proof}
We calculate the mass of traveling waves on $\T_{2L}$ as 
\begin{align*}
\| \Phi^L_{\omega ,c} \|_{L^2(\T_{2L})}^2 &=2\int_{0}^L \eta_3 \frac{\dn^2 \l( \frac{x}{2g} ; k\r)}{1+ \beta^2 \sn^2 \l( \frac{x}{2g} ; k\r)} dx \\
&=4g\eta_3 \int_{0}^{K(k)} \frac{\dn^2 (x;k)}{1+\beta^2 \sn^2 (x;k)} dx,
\end{align*}
where we used $L=2gK(k)$ in the last equality. Applying formula 410.04 in \cite{BF71}, we have
\begin{align}
\label{eq:4.3}
\| \Phi^L_{\omega ,c} \|_{L^2(\T_{2L})}^2 =4g\eta_3 \sqrt{\frac{k^2+\beta^2}{(1+\beta^2 )\beta^2}} G(\mu ,k),
\end{align}
where 
\begin{align}
\label{eq:4.4}
G(\mu , k) &:=K(k)E(\mu ,k')-K(k)F(\mu ,k') +E(k) F(\mu ,k' ), \\
\label{eq:4.5}
\mu &:= \sin^{-1} \sqrt{\frac{\beta^2}{\beta^2 +k^2}}.
\end{align}
We note that $\mu$ is regarded as a function of $\eta_3$ and that $0 < \mu < \frac{\pi}{2}$ when $\alpha_0 <\eta_3 <\alpha_1$. We set 
\begin{align}
\label{eq:4.6}
\mu_1:=\lim_{\eta_3 \to \alpha_1}\mu =\lim_{\eta_3 \to \alpha_1} \sin^{-1} \sqrt{\frac{\beta^2}{\beta^2 +k^2}}.
\end{align}
Case 1: $\omega >\frac{c^2}{4} $. 
%By (%ref{eq:3.29}%), we have
By Lemma \ref{lem:3.4} and Lemma \ref{lem:3.6}, we have
\begin{align}
\label{eq:4.7}
\lim_{\eta_3 \to \alpha_1} 4g\eta_3 \sqrt{\frac{k^2+\beta^2}{(1+\beta^2 )\beta^2}} 
&= \lim_{\eta_3 \to \alpha_1} \frac{4g\eta_3}{\beta} \\
&= \frac{8(2\sqrt{\omega}+c)}{\sqrt{(2\sqrt{\omega}+c)(2\sqrt{\omega} -c)} } \sqrt{\frac{2\sqrt{\omega}-c}{2\sqrt{\omega}+c}} \notag \\
&=8 .\notag 
\end{align}
By the Taylor expansion, we have 
\begin{align}
\label{eq:4.8}
\frac{1}{\sqrt{1-x}} &=1+\sum_{n=1}^{\infty} \frac{(2n-1)!!}{(2n)!!} x^n, \\
\label{eq:4.9}
\sqrt{1-x} &= 1 - \sum_{n=1}^{\infty} \frac{1}{2n-1}\cdot \frac{(2n-1)!!}{(2n)!!} x^n
\end{align}
for all $|x|<1$. Let $\tau \in (0, \frac{\pi}{2})$. Applying (\ref{eq:4.8}) and (\ref{eq:4.9}), we have
% $\mu$ は$k$とともに動くから、以下の計算では$\mu$と書かないほうが良いかも。
\begin{align}
\label{eq:4.10}
E(\tau ,k') &= \int_{0}^{\tau} \sqrt{1-k'^2 \sin^2 \theta } d\theta \\
&=\theta - \sum_{n=1}^{\infty} \frac{1}{2n-1}\cdot \frac{(2n-1)!!}{(2n)!!} k'^{2n} \int_{0}^{\tau} \sin^{2n} \theta d\theta ,\notag \\
\label{eq:4.11}
F(\tau ,k') &= \int_{0}^{\tau} \frac{d\theta }{\sqrt{1-k'^2 \sin^2 \theta }} \\
&=\theta + \sum_{n=1}^{\infty} \frac{(2n-1)!!}{(2n)!!} k'^{2n} \int_{0}^{\tau} \sin^{2n} \theta d\theta .\notag
\end{align}
By (\ref{eq:4.10}) and (\ref{eq:4.11}), we have
\begin{align}
\label{eq:4.12}
\sup_{0\leq \tau \leq \frac{\pi}{2}} | E(\tau ,k') -F(\tau , k')| &\leq \frac{\pi}{2}\sum_{n=1}^{\infty} \frac{2n}{2n-1}\cdot \frac{(2n-1)!!}{(2n)!!} k'^{2n} \\
&\leq Ck'^2 ,\notag
\end{align}
where $C$ is independent of $k'$. By (\ref{eq:2.2}) and (\ref{eq:4.12}), we deduce that
\begin{align}
\label{eq:4.13}
\sup_{0 \leq \tau \leq \frac{\pi}{2}} |K(k) (E(\tau , k') -F(\tau , k')) | 
\leq C k'^2 \l( -\log \frac{k'}{4}\r) \to 0 
% ~\text{as}~k' \to 0 .
\end{align}
as $k' \to 0$. Especially, we deduce that
\begin{align}
\label{eq:4.14}
\lim_{\eta_3 \to \alpha_1}K(k) (E(\mu , k') -F(\mu , k')) =0 .
\end{align}
% By (\ref{eq:4.6}) and (%ref{eq:3.29}%), we have
By (\ref{eq:4.6}) and Lemma \ref{lem:3.6}, we have
\begin{align*}
\sin \mu_1 = \lim_{\eta_3 \to \alpha_1} \sqrt{\frac{\beta^2}{\beta^2 +k^2}} 
=\sqrt{\frac{2\sqrt{\omega}+c}{4\sqrt{\omega}} }.
\end{align*}
Since
\begin{align*}
\sin^2 \mu_1 =\frac{2\sqrt{\omega}+c}{4\sqrt{\omega}},~\cos^2 \mu_1 =
\frac{2\sqrt{\omega}-c}{4\sqrt{\omega}} 
\end{align*}
and $\mu_1 \in [0, \frac{\pi}{2}]$, we deduce that
\begin{align}
\label{eq:4.15}
\mu_1 =\tan^{-1} \sqrt{ \frac{2\sqrt{\omega}+c}{2\sqrt{\omega}-c} }.
\end{align}
By (\ref{eq:4.3}), (\ref{eq:4.7}), (\ref{eq:4.14}) and (\ref{eq:4.15}), we obtain that
\begin{align}
\lim_{L\to \infty}\| \Phi^L_{\omega ,c} \|_{L^2(\T_{2L})}^2 &=\lim_{\eta_3 \to \alpha_1}4g\eta_3 \sqrt{\frac{k^2+\beta^2}{(1+\beta^2 )\beta^2}} G(\mu ,k)  \\
&=8E(1) F(\mu_1 ,0) \notag \\
&=8\mu_1 \notag \\
&=8\tan^{-1} \sqrt{ \frac{2\sqrt{\omega}+c}{2\sqrt{\omega}-c} }
=  \| \Phi_{\omega ,c} \|_{L^2(\R )}^2 . \notag
\end{align}
Case 2: $\omega =c^2/4 $ and $c>0$.  Since
\begin{align}
\label{eq:4.17}
\begin{array}{lll}
k^2 \to \frac{1}{2}, &k'^2 \to \frac{1}{2}, &\beta \to \infty ,\\[3pt]
\eta_1 \to 0, & \eta_2 \to 0
\end{array}
\end{align}
as $\eta_3 \to \alpha_1$ in this case, we need to modify the previous calculation. By (\ref{eq:4.17}), we have
\begin{align}
\label{eq:4.18}
\frac{k^2+\beta^2}{1+\beta^2} \to 1
% ~\text{as}~\eta_3 \to \alpha_1
\end{align}
as $\eta_3 \to \alpha_1$. By using the definition of $k, g$ and $\beta$, we have
\begin{align}
\label{eq:4.19}
\frac{4g\eta_3}{\beta} &=\frac{8\eta_3}{\sqrt{\eta_3 ( \eta_2 -\eta_1 )}} \cdot \sqrt{\frac{-\eta_1}{\eta_3}} \cdot \sqrt{\frac{\eta_3 (\eta_2 -\eta_1 )}{-\eta_{1}(\eta_3-\eta_2 )}} \\
&= 8\sqrt{ \frac{\eta_3}{\eta_3-\eta_2} }. \notag
% ~\text{as}~\eta_3 \to \alpha_1 .
\end{align}
By (\ref{eq:4.18}) and (\ref{eq:4.19}), we obtain that
\begin{align}
\label{eq:4.20}
\lim_{\eta_3 \to \alpha_1} 4g\eta_3 \sqrt{\frac{k^2+\beta^2}{(1+\beta^2 )\beta^2}} 
&= \lim_{\eta_3 \to \alpha_1} \frac{4g\eta_3}{\beta} \\
&=\lim_{\eta_3 \to \alpha_1} 8\sqrt{ \frac{\eta_3}{\eta_3-\eta_2} } \notag \\
&=8.\notag
\end{align}
By (\ref{eq:4.17}), we note that
\begin{align}
\mu_1 =\lim_{\eta_3 \to \alpha_1} \sin^{-1} \sqrt{\frac{\beta^2}{\beta^2 +k^2}}
= \sin^{-1} 1=\frac{\pi}{2} .
\end{align}
Hence, we obtain that
\begin{align}
\label{eq:4.22}
\lim_{\eta_3 \to \alpha_1} G(\mu ,k) &= G(\mu_1, {\scriptstyle \frac{1}{\sqrt{2}}}) \\
&=(KE'-KK'+EK') ({\scriptstyle \frac{1}{\sqrt{2}}}) \notag \\
&=\frac{\pi}{2} , \notag
% \l( \frac{1}{\sqrt{2}}\r)
\end{align}
where we used the Legendre relation (\ref{eq:2.4}) in the last equality. By (\ref{eq:4.3}), (\ref{eq:4.20}) and (\ref{eq:4.22}), we obtain that
\begin{align}
\lim_{L\to \infty}\| \Phi^L_{c^2/4 ,c} \|_{L^2(\T_{2L})}^2 &=\lim_{\eta_3 \to \alpha_1}4g\eta_3 \sqrt{\frac{k^2+\beta^2}{(1+\beta^2 )\beta^2}} G(\mu ,k)  \\
&=8G(\mu_1, {\scriptstyle \frac{1}{\sqrt{2}}}) \notag \\
&=4\pi
=  \| \Phi_{c^2/4 ,c} \|_{L^2(\R )}^2 . \notag
\end{align}
This completes the proof.
\end{proof}
%%%%%%%%%%%%%%%%%%%
%%%%%%%%%%%%%%%%%%
%%%%%%%%%%%%%%%%%%
Next, we prove the following theorem. This is the partial statement of Theorem~\ref{thm:1.2}.
\begin{theorem}
\label{thm:4.2}
Let $(\omega ,c)$ satisfy \textup{(\ref{WC})}. Then, we have
\begin{align}
\label{eq:4.24}
\lim_{L\to \infty} \| \Phi_{\omega ,c}^L -\Phi_{\omega ,c}\|_{H^m ([-L,L])} =0
\end{align}
for all $m=0 ,1, 2$.
\end{theorem}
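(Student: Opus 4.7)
The plan is to proceed by bootstrapping on $m$, starting with $L^{2}$-convergence and then lifting to higher derivatives by exploiting the fact that both $\Phi_{\omega,c}^L$ and $\Phi_{\omega,c}$ satisfy the same elliptic equation \eqref{ELL}. The three main inputs will be the pointwise convergence from Theorem \ref{thm:1.1}, the mass convergence from Theorem \ref{thm:4.1}, and the uniform $L^{\infty}$-bound $\|\Phi_{\omega,c}^L\|_{L^{\infty}},\|\Phi_{\omega,c}\|_{L^{\infty}} \leq \sqrt{\alpha_1}$, which follows from Remark \ref{rem:1.1} together with $\Phi_{\omega,c}^2(0) = \alpha_1$.

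For $m=0$, let $\widetilde{\Phi}_{\omega,c}^L$ denote the zero-extension of $\Phi_{\omega,c}^L$ from $[-L,L]$ to $\R$. Theorem \ref{thm:1.1} gives $\widetilde{\Phi}_{\omega,c}^L \to \Phi_{\omega,c}$ pointwise on $\R$ (since any fixed $x$ eventually lies in $[-L,L]$), and Theorem \ref{thm:4.1} gives $\|\widetilde{\Phi}_{\omega,c}^L\|_{L^2(\R)} \to \|\Phi_{\omega,c}\|_{L^2(\R)}$, so the Br\'{e}zis--Lieb lemma applied in $L^2(\R)$ yields $\widetilde{\Phi}_{\omega,c}^L \to \Phi_{\omega,c}$ strongly in $L^2(\R)$. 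Splitting
\[
\|\widetilde{\Phi}_{\omega,c}^L - \Phi_{\omega,c}\|_{L^2(\R)}^2 = \|\Phi_{\omega,c}^L - \Phi_{\omega,c}\|_{L^2([-L,L])}^2 + \|\Phi_{\omega,c}\|_{L^2(\R \setminus [-L,L])}^2
\]
and noting that both terms are nonnegative, the first summand tends to $0$.

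For $m=2$, set $w_L := \Phi_{\omega,c}^L - \Phi_{\omega,c}$. Subtracting the two instances of \eqref{ELL} gives
\[
w_L'' = \l(\omega - \tfrac{c^2}{4}\r) w_L + \tfrac{c}{2}\bigl[(\Phi_{\omega,c}^L)^3 - \Phi_{\omega,c}^3\bigr] - \tfrac{3}{16}\bigl[(\Phi_{\omega,c}^L)^5 - \Phi_{\omega,c}^5\bigr].
\]
Factoring $(\Phi_{\omega,c}^L)^j - \Phi_{\omega,c}^j$ as $w_L$ times a symmetric polynomial in $\Phi_{\omega,c}^L$ and $\Phi_{\omega,c}$ and invoking the uniform $L^{\infty}$-bound yields $|w_L''| \leq C(\omega,c)\,|w_L|$ pointwise on $[-L,L]$, so $\|w_L''\|_{L^2([-L,L])} \leq C \|w_L\|_{L^2([-L,L])} \to 0$ by the $m=0$ step. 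For $m=1$, integration by parts gives
\[
\|w_L'\|_{L^2([-L,L])}^2 = \bigl[w_L\, w_L'\bigr]_{-L}^{L} - \int_{-L}^{L} w_L\, w_L''\, dx ,
\]
and the integral is controlled by $\|w_L\|_{L^2}\|w_L''\|_{L^2} \to 0$. For the boundary term, $\Phi_{\omega,c}^L$ attains its minimum at $\pm L$, so $(\Phi_{\omega,c}^L)'(\pm L) = 0$ and $\Phi_{\omega,c}^L(\pm L) = \sqrt{\eta_2} \to 0$ by Lemma \ref{lem:3.4}; together with the decay of $\Phi_{\omega,c}$ and $\Phi_{\omega,c}'$ at infinity (explicit from \eqref{eq:1.7} in both the exponential and algebraic regimes), this gives $w_L(\pm L), w_L'(\pm L) \to 0$, and the boundary term vanishes.

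The main obstacle is the $m=0$ step, whose success rests on the mass convergence of Theorem \ref{thm:4.1} --- a delicate elliptic-integral computation that, in the massless case, requires the Legendre relation \eqref{eq:2.4} to extract a finite limit from factors that individually diverge or vanish. Once $L^{2}$-convergence is in hand, the bootstrap is comparatively routine because both functions solve the same nonlinear ODE and the boundary values of $\Phi_{\omega,c}^L$ at $\pm L$ (zero derivative, vanishing height) match the decay of the soliton $\Phi_{\omega,c}$ at infinity.
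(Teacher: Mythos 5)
Your proposal is correct and follows essentially the same route as the paper: Br\'ezis--Lieb plus the mass convergence of Theorem \ref{thm:4.1} for $m=0$, then the shared elliptic equation \eqref{ELL} with the uniform $L^\infty$-bound for $m=2$, and integration by parts to recover $m=1$. The only difference is that you explicitly justify the vanishing of the boundary term $\bigl[w_L w_L'\bigr]_{-L}^{L}$ via $(\Phi_{\omega,c}^L)'(\pm L)=0$ and $\eta_2\to 0$, a point the paper's integration by parts passes over silently, so your version is if anything slightly more careful.
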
 
To prove Theorem~\ref{thm:4.2}, we recall the Br\'ezis--Lieb lemma.
\begin{lemma}[{\cite{BL83}}]
\label{lem:4.3}
Let $1\leq p < \infty$. Let $\{f_L\}$ be a bounded sequence in $L^p(\R)$ and $f_L \to f$ a.e. in $\R$ as $L\to \infty$. Then we have 
\begin{align*}
\| f_L\|_{L^p}^p - \| f_L-f\|_{L^p}^p - \|f \|_{L^p}^p \to 0
\end{align*}
as $L \to \infty$.
\end{lemma}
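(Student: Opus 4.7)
The plan is to prove the Br\'{e}zis--Lieb lemma by the standard truncation argument, built on a pointwise algebraic inequality together with the dominated convergence theorem. First I would note that $f\in L^p(\R)$: since $|f_L|^p\to|f|^p$ almost everywhere and $M_0:=\sup_L\|f_L\|_{L^p}^p<\infty$ by hypothesis, Fatou's lemma gives $\|f\|_{L^p}^p\le M_0$, and consequently the quantities $\|f_L-f\|_{L^p}^p$ are uniformly bounded, say by some $M<\infty$.

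The key ingredient is the following elementary inequality: for every $\eps>0$ there exists $C_\eps>0$ such that, for all $a,b\in\C$,
\begin{equation*}
\bigl||a+b|^p-|a|^p\bigr|\le \eps|a|^p+C_\eps|b|^p.
\end{equation*}
I would derive this by homogeneity and continuity. Dividing through by $|a|^p$ reduces matters to bounding $h(t):=\bigl||1+t|^p-1\bigr|$ for $t=b/a\in\C$. By continuity of $h$ at $0$ there is $\delta=\delta(\eps)>0$ with $h(t)\le\eps$ whenever $|t|\le\delta$, while for $|t|>\delta$ one has $h(t)\le(1+|t|)^p+1\le C_\eps|t|^p$ (using $|t|^p\ge\delta^p$). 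Rescaling yields the displayed inequality; note that this case split covers the full range $1\le p<\infty$, including the non-smooth endpoint $p=1$.

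Applying the inequality with $a=f_L-f$ and $b=f$ and writing $|f_L|^p=\bigl|(f_L-f)+f\bigr|^p$ gives pointwise
\begin{equation*}
W_L:=\bigl||f_L|^p-|f_L-f|^p-|f|^p\bigr|\le \eps|f_L-f|^p+(C_\eps+1)|f|^p.
\end{equation*}
I then truncate: set $G_{L,\eps}:=\max\bigl(W_L-\eps|f_L-f|^p,\,0\bigr)$. Then $0\le G_{L,\eps}\le(C_\eps+1)|f|^p\in L^1(\R)$, while $f_L\to f$ a.e.\ forces $W_L\to 0$ a.e., hence $G_{L,\eps}\to 0$ a.e. The dominated convergence theorem now yields $\int_\R G_{L,\eps}\,dx\to 0$ as $L\to\infty$.

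To conclude, combining $W_L\le G_{L,\eps}+\eps|f_L-f|^p$ with $\|f_L-f\|_{L^p}^p\le M$,
\begin{equation*}
\Bigl|\|f_L\|_{L^p}^p-\|f_L-f\|_{L^p}^p-\|f\|_{L^p}^p\Bigr|\le\int_\R W_L\,dx\le\int_\R G_{L,\eps}\,dx+\eps M.
\end{equation*}
Sending $L\to\infty$ gives $\limsup_{L\to\infty}\bigl|\cdots\bigr|\le\eps M$, and arbitrariness of $\eps>0$ finishes the proof. The main obstacle is really just the pointwise inequality displayed above: one has to avoid any differentiability assumption on $t\mapsto|1+t|^p$, which is why the two-region estimate (small $|t|$ versus large $|t|$) is used in place of a mean-value argument; everything afterwards is a clean dominated convergence/truncation bookkeeping.
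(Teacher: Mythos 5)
Your proof is correct and is precisely the classical truncation argument of Br\'ezis and Lieb (the elementary inequality $\bigl||a+b|^p-|a|^p\bigr|\le\eps|a|^p+C_\eps|b|^p$ followed by dominated convergence applied to the positive part $G_{L,\eps}$), which is the proof in the reference \cite{BL83} that the paper cites; the paper itself states the lemma without proof. The only cosmetic point is that the homogeneity reduction tacitly assumes $a\neq 0$, but the case $a=0$ is immediate since $C_\eps\ge 1$.
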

\begin{proof}[Proof of Theorem~\ref{thm:4.2}]
We consider $\Phi_{\omega ,c}^L$ as the function defined on $\R$. More precisely, we extend the function $\Phi_{\omega ,c}^L$ as
\begin{align}
\label{eq:4.25}
\Phi_{\omega ,c}^L (x)=
\l\{
\begin{array}{ll}
\ds \Phi_{\omega ,c}^L (x) &\ds \text{if}~x \in [-L ,L],\\[3pt]
\ds \Phi_{\omega ,c}^L (x-2Lk) &\ds \text{if}~x \in [(2k-1)L ,(2k+1)L],~k\in \Z\setminus \{ 0\} .
\end{array}
\r. 
\end{align}
We set $f_L=\chi_{[-L,L]} \Phi_{\omega ,c}^L$ and $f=\Phi_{\omega ,c}$. By Theorem~\ref{thm:1.1} and Theorem~\ref{thm:4.1}, we have
\begin{align*}
&f_L (x) \to f(x)~\text{for all}~ x \in \R, \\
&\| f_L\|_{L^2(\R )}^2 \to \| f\|_{L^2 (\R )}^2
\end{align*}
as $L \to \infty$. Applying Lemma~\ref{lem:4.3}, we obtain
\begin{align}
\label{eq:4.26}
\lim_{L\to \infty}\| f_L -f\|_{L^2(\R )}^2 = 0.
\end{align}
% From the exciplit formula (\ref{eq:1.7}) of the solitary waves, 
Since $f \in L^2(\R)$, we have
\begin{align}
\label{eq:4.27}
\lim_{L\to \infty} \| f\|_{L^2(|x|\geq L)}^2 =\lim_{L\to \infty} \| \Phi_{\omega ,c}\|_{L^2(|x|\geq L)}^2 =0.
\end{align}
By (\ref{eq:4.26}) and (\ref{eq:4.27}), we deduce that
\begin{align}
\label{eq:4.28}
\lim_{L\to \infty} \| \Phi_{\omega ,c}^L -\Phi_{\omega ,c}\|_{L^2 ([-L,L])}=
\lim_{L\to \infty} \| f_L-f \|_{L^2(|x|\leq L)} =0.
\end{align}

Next we prove 
\begin{align}
\label{eq:4.29}
\lim_{L\to \infty} \| \del^2 \Phi_{\omega ,c}^L- \del^2 \Phi_{\omega ,c} \|_{L^2([-L,L])} =0.
\end{align}
We note that $\Phi_{\omega ,c}^L$ and $\Phi_{\omega ,c}$ satisfy the same equation (\ref{ELL}). 
For each $L>0$, we have 
\begin{align}
\label{eq:4.30}
|\Phi_{\omega ,c}^L(x)|^2 \leq \eta_3  ~\text{for all}~ x \in [-L,L],
\end{align}
since $\sqrt{\eta_3}$ is maximum value of $\Phi_{\omega ,c}^L$. By the exciplit formula (\ref{eq:1.7}) of the soliton, we have
\begin{align}
\label{eq:4.31}
\| f\|_{L^{\infty}(\R)}^2 =\Phi_{\omega ,c}^2 (0) =4\sqrt{\omega}+2c=\alpha_1 .
\end{align}
By (\ref{eq:4.30}), (\ref{eq:4.31}) and (\ref{eq:4.28}), we deduce that
\begin{align*}
\| f_L^3-f^3\|_{L^2([-L,L])} &\leq C (\| f_L\|_{L^{\infty}([-L,L])}^2 +\| f\|_{L^{\infty}([-L,L])}^2 ) \| f_L-f\|_{L^2([-L,L])} \\
&\leq C(\eta_3 +\alpha_1 )\| f_L-f\|_{L^2([-L,L])}  \\
&\leq 2C\alpha_1 \| f_L-f\|_{L^2([-L,L])} \underset{L\to\infty}{\longrightarrow} 0.
\end{align*}
Similarly, we have
\begin{align*}
\| f_L^5-f^5\|_{L^2([-L,L])} &\leq C (\| f_L\|_{L^{\infty}([-L,L])}^4 +\| f\|_{L^{\infty}([-L,L])}^4 ) \| f_L-f\|_{L^2([-L,L])} \\
&\leq 2C\alpha_1^2\| f_L-f\|_{L^2([-L,L])} \underset{L\to\infty}{\longrightarrow} 0.
\end{align*}
Hence, by using the equation (\ref{ELL}), we deduce that
\begin{align*}
\| \del^2 \Phi_{\omega ,c}^L- \del^2 \Phi_{\omega ,c} \|_{L^2([-L,L])} &\leq \l(\omega- \frac{c^2}{4}\r) \| f_L-f\|_{L^2([-L,L])}
\\
&\quad +\frac{c}{2} \| f_L^3-f^3\|_{L^2([-L,L])}+\frac{3}{16}\| f_L^5-f^5\|_{L^2([-L,L])}\\
&~ \underset{L\to\infty}{\longrightarrow} 0.
\end{align*}
Finally, by integration by parts, we obtain that
\begin{align*}
\| \del \Phi_{\omega ,c}^L- \del \Phi_{\omega ,c} \|_{L^2([-L,L])}^2
&=-\int_{-L}^L \l( \del^2 \Phi_{\omega ,c}^L- \del^2 \Phi_{\omega ,c} \r) \l( \Phi_{\omega ,c}^L- \Phi_{\omega ,c}\r) dx\\
&\leq \| \del^2 \Phi_{\omega ,c}^L- \del^2 \Phi_{\omega ,c} \|_{L^2([-L,L])}\| \Phi_{\omega ,c}^L-  \Phi_{\omega ,c} \|_{L^2([-L,L])}\\
&\underset{L\to\infty}{\longrightarrow} 0.
\end{align*}
This completes the proof.
\end{proof}
To prove the estimate (\ref{eq:4.24}) for $m \geq 3$ by using the equation (\ref{ELL}), we need to control $L^{\infty}$-norm of lower derivative $\del_x^k \Phi_{\omega ,c}^L$ where $k=1, 2, \cdots, m-1$. To achieve this, we discuss $L^{\infty}$-convergence of $\Phi_{\omega ,c}^L$ in next subsection.
%%%%%%%%%%
%%%%%%%%%%
\subsection{$L^{\infty}$-convergence}
In this subsection, we mainly prove the following proposition.
\begin{proposition}
\label{prop:4.4}
Let $(\omega ,c)$ satisfy \textup{(\ref{WC})}. Then, we have
\begin{align}
\label{eq:4.32}
\lim_{L\to \infty} \| \Phi_{\omega ,c}^L -\Phi_{\omega ,c}\|_{L^{\infty} ([-L,L])} =0.
\end{align}
\end{proposition}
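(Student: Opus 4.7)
The plan is to derive an $L^\infty$ estimate on $[-L,L]$ whose constants do not blow up with $L$, and then invoke Theorem~\ref{thm:4.2} to conclude. First I would establish the following Sobolev-type inequality: for any $f \in H^1([-L,L])$ and any $x, x_0 \in [-L,L]$, the fundamental theorem of calculus together with the Cauchy--Schwarz inequality gives
\begin{align*}
f(x)^2 = f(x_0)^2 + 2\int_{x_0}^{x} f(y) f'(y)\,dy \leq f(x_0)^2 + 2\|f\|_{L^2([-L,L])} \|f'\|_{L^2([-L,L])}.
\end{align*}
Averaging over $x_0 \in [-L,L]$ yields the key estimate
\begin{align*}
\|f\|_{L^\infty([-L,L])}^2 \leq \frac{1}{2L}\|f\|_{L^2([-L,L])}^2 + 2\|f\|_{L^2([-L,L])}\|f'\|_{L^2([-L,L])}.
\end{align*}
The crucial feature, compared to the naive Sobolev embedding $H^1([-L,L]) \hookrightarrow L^\infty([-L,L])$ whose constant depends badly on $L$, is that the $\frac{1}{2L}$ factor only helps as $L\to\infty$, while the second term has an $L$-independent constant.

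Next I would apply this inequality with $f = \Phi_{\omega,c}^L - \Phi_{\omega,c}$. By Theorem~\ref{thm:4.2} applied with $m=0$ and $m=1$, both $\|\Phi_{\omega,c}^L - \Phi_{\omega,c}\|_{L^2([-L,L])}$ and $\|\partial_x(\Phi_{\omega,c}^L - \Phi_{\omega,c})\|_{L^2([-L,L])}$ tend to zero as $L\to\infty$. Hence each of the two terms on the right-hand side of the estimate above tends to zero, which establishes \eqref{eq:4.32}.

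I do not anticipate a serious obstacle, provided Theorem~\ref{thm:4.2} is available. The main conceptual point, as flagged in Remark~\ref{rem:1.3}, is the careful tracking of the $L$-dependence of the Sobolev constant; the averaging argument above handles this dependence in a direct and elementary manner, and converts the $H^1$-convergence from Theorem~\ref{thm:4.2} into uniform convergence on the $L$-dependent interval $[-L,L]$.
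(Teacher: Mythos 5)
Your proof is correct. The averaged Sobolev inequality
\begin{align*}
\|f\|_{L^\infty([-L,L])}^2 \leq \frac{1}{2L}\|f\|_{L^2([-L,L])}^2 + 2\|f\|_{L^2([-L,L])}\|f'\|_{L^2([-L,L])}
\end{align*}
is valid with the stated $L$-independent (indeed $L$-improving) constants, and combined with Theorem~\ref{thm:4.2} for $m=0,1$ it immediately yields \eqref{eq:4.32}. Your route differs from the paper's in the details, though both rest on the fundamental theorem of calculus, Cauchy--Schwarz, and the $H^1$-convergence of Theorem~\ref{thm:4.2}. The paper does not average over the anchor point: it anchors at $x_0=0$, where the values of the two functions are explicitly known ($(\Phi_{\omega,c}^L(0))^2=\eta_3$ and $\Phi_{\omega,c}(0)^2=\alpha_1$, with $\eta_3\to\alpha_1$ in the long-period limit), works with the difference of \emph{squares} $f_L^2-f^2$, and recovers the difference of the functions at the end via the elementary inequality $|\sqrt{x}-\sqrt{y}|\leq\sqrt{|x-y|}$. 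Your argument is a clean, general-purpose Agmon-type estimate: it treats the difference $\Phi_{\omega,c}^L-\Phi_{\omega,c}$ directly as a black box, requires no information about values at a special point, avoids the square-root step, and would apply verbatim to any family converging in $H^1([-L,L])$. The paper's version, by contrast, leans on structure specific to these solutions (and the paper also records a second, entirely different alternative proof using radial monotonicity and pointwise decay). Both are equally rigorous; yours is arguably the more portable of the two.
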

\begin{proof}
Since $\Phi_{\omega ,c}^L$ and $\Phi_{\omega ,c}$ are even functions, it is enough to consider the interval $[0,L]$. We use the same notation in the proof of Theorem~\ref{thm:4.2}. By fundamental theorem of calculus, we have
\begin{align*}
f_L^2(x) &=f_L(0)^2+\int_{0}^x \frac{d}{dy} f_L^2(y) dy, \\
f^2(x) &=f(0)^2+\int_{0}^x \frac{d}{dy} f^2(y) dy,
\end{align*}
for all $x \in [0,L]$. Since $f_L(0)^2=\eta_3$ and $f(0)^2=\alpha_1$, we have
\begin{align}
\label{eq:4.33}
f_L^2(x) -f(x)^2 =\eta_3 -\alpha_1 +2\int_{0}^{x} (f_Lf_L' -ff') dy
\end{align}
for all $x \in [0,L]$. By Theorem~\ref{thm:4.1}, we note that
\begin{align}
\label{eq:4.34}
\sup_{L_0<L<\infty} \| f_L\|_{L^2([0,L])} \leq C=C(\| f\|_{L^2(\R )}).
\end{align}
Applying H\"older's inequality and (\ref{eq:4.34}), we deduce that
\begin{align*}
\int_{0}^L |f_Lf_L'-ff'|dy &\leq \| f_L\|_{L^2([0,L])}\| f_L'-f'\|_{L^2([0,L])}+\| f'\|_{L^2([0,L])}\| f_L -f\|_{L^2([0,L])}\\
&\leq C\| f_L -f\|_{H^1([0,L])} .
\end{align*}
Combined with (\ref{eq:4.33}), it follows from Theorem~\ref{thm:4.2} that
\begin{align}
\label{eq:4.35}
\| f_L^2-f^2\|_{L^{\infty}([0,L])} &\leq |\eta_3  -\alpha_1 |+2\int_{0}^L |f_Lf_L'-ff'|dy\\
&\leq |\eta_3 -\alpha_1 |+C\| f_L -f\|_{H^1([0,L])}  
\underset{L\to\infty}{\longrightarrow} 0. \notag
\end{align}
By using the elementary inequality
\begin{align*}
|\sqrt{x}-\sqrt{y}| \leq \sqrt{|x-y|}~\text{for all}~ x,y \geq 0
\end{align*}
and (\ref{eq:4.35}), we deduce that
\begin{align}
\| f_L-f\|_{L^{\infty}([0,L])} &\leq \sqrt{\| f_L^2-f^2\|_{L^{\infty}([0,L])} }
\underset{L\to\infty}{\longrightarrow} 0. \notag
\end{align}
This completes the proof.
\end{proof}
%%%%%%%%%%%
%%%%%%%%%%%%%%%%%%%
%%%%%%%%%%%%%%%%%%%
\begin{remark}
We can also prove Proposition~\ref{prop:4.4} directly without using the result of Theorem~\ref{thm:4.2}. Given a $\eps >0$. By the decay of $\Phi_{\omega, c}$ and the pointwise convergence in Theorem~\ref{thm:1.1}, there exists $L_0>0$ such that
\begin{align}
\label{eq:4.36}
|\Phi_{\omega, c} (L_0)| <\eps ,~|\Phi_{\omega, c}^L (L_0) | < 2\eps 
\end{align}
for large $L>L_0>0$. Both $\Phi_{\omega, c}^L$ and $\Phi_{\omega, c}$ are radial and decreasing functions, we deduce that
\begin{align}
\label{eq:4.37}
 \| \Phi_{\omega, c}\|_{L^{\infty}(L_0\leq |x|\leq L)} <\eps ,~\| \Phi_{\omega, c}^L \|_{L^{\infty}(L_0\leq |x|\leq L )}<2\eps 
\end{align}
for large $L>0$. On the other hand, by reviewing the proof of Theorem~\ref{thm:1.1}, it is easily verified that
\begin{align}
\label{eq:4.38}
\lim_{L\to \infty} \| \Phi_{\omega, c}^L-\Phi_{\omega, c} \|_{L^{\infty}([-L_0 ,L_0])}=0 .
\end{align}
By (\ref{eq:4.37}) and (\ref{eq:4.38}), we obtain
\begin{align*}
\limsup_{L\to \infty}\|\Phi_{\omega, c}^L-\Phi_{\omega, c} \|_{L^{\infty}([-L,L])}\leq 2\eps .
\end{align*}
This gives an alternative proof of Proposition~\ref{prop:4.4}.
\end{remark}

The following proposition follows from Proposition~\ref{prop:4.4} and similar discussion on the proof of Theorem~\ref{thm:4.2}.
\begin{proposition}
\label{prop:4.5}
Let $(\omega ,c)$ satisfy \textup{(\ref{WC})}. Then, we have
\begin{align}
\label{eq:4.39}
\lim_{L\to \infty} \| \Phi_{\omega ,c}^L -\Phi_{\omega ,c}\|_{C^m([-L,L])} =0
\end{align}
for all $m=0 ,1, 2$.
\end{proposition}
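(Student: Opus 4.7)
The plan is to handle the cases $m = 0, 1, 2$ in sequence, taking Proposition~\ref{prop:4.4} as the base case and bootstrapping through the elliptic equation~\textup{(\ref{ELL})}, in direct parallel with the proof of Theorem~\ref{thm:4.2} but working in $L^\infty$ rather than $L^2$. The case $m = 0$ is exactly Proposition~\ref{prop:4.4}.

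For $m = 2$, I would use~\textup{(\ref{ELL})} to express
\begin{align*}
\del_x^2 \Phi_{\omega,c}^L - \del_x^2 \Phi_{\omega,c}
&= \l(\omega - \tfrac{c^2}{4}\r)\l( \Phi_{\omega,c}^L - \Phi_{\omega,c}\r)
+ \tfrac{c}{2}\l( (\Phi_{\omega,c}^L)^3 - \Phi_{\omega,c}^3\r) \\
&\quad - \tfrac{3}{16}\l( (\Phi_{\omega,c}^L)^5 - \Phi_{\omega,c}^5\r),
\end{align*}
and combine Proposition~\ref{prop:4.4} with the uniform bound $\|\Phi_{\omega,c}^L\|_{L^\infty([-L,L])}^2 = \eta_3 < \alpha_1$ (Remark~\ref{rem:1.1}) to deduce $(\Phi_{\omega,c}^L)^k \to \Phi_{\omega,c}^k$ in $L^\infty([-L, L])$ for $k = 1, 3, 5$, hence $\del_x^2 \Phi_{\omega,c}^L \to \del_x^2 \Phi_{\omega,c}$ uniformly. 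This step is routine.

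The case $m = 1$ is the main obstacle. A naive integration of $\del_x^2(\Phi_{\omega,c}^L - \Phi_{\omega,c})$ from $0$ to $x$ picks up a factor of $L$ that fails to close as $L \to \infty$, and the integration-by-parts trick used in the proof of Theorem~\ref{thm:4.2} only yields an $L^2$-estimate. I propose instead to exploit the first integral of~\textup{(\ref{ELL})} obtained by multiplying by $\del_x\Phi$ and integrating once,
\begin{align*}
(\del_x\Phi)^2 = \l(\omega - \tfrac{c^2}{4}\r)\Phi^2 + \tfrac{c}{4}\Phi^4 - \tfrac{1}{16}\Phi^6 + 2C_\psi ,
\end{align*}
with $C_\psi$ the same integration constant as in~\textup{(\ref{eq:3.1})}. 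For the soliton $\Phi_{\omega,c}$ this constant vanishes (the corresponding $\eta_2$ equals $0$), while for $\Phi_{\omega,c}^L$ one has $C_{\psi,L} = \eta_1\eta_2\eta_3/32$ by~\textup{(\ref{eq:3.3})}. Since $\eta_2 \to 0$ as $L \to \infty$ by Lemma~\ref{lem:3.4} while $\eta_1$ and $\eta_3$ remain bounded, $C_{\psi,L} \to 0$. Subtracting the two first integrals and applying Proposition~\ref{prop:4.4} together with the uniform $L^\infty$ bound then yields
\begin{align*}
\l\|(\del_x\Phi_{\omega,c}^L)^2 - (\del_x\Phi_{\omega,c})^2\r\|_{L^\infty([-L, L])} \to 0.
\end{align*}

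To pass from uniform convergence of the squares of the derivatives to that of the derivatives themselves, I would use that both $\Phi_{\omega,c}^L$ and $\Phi_{\omega,c}$ are even single-bump profiles with maximum at $x = 0$, so $\del_x\Phi_{\omega,c}^L \leq 0$ and $\del_x\Phi_{\omega,c} \leq 0$ on $[0, L]$. The elementary inequality $|\sqrt{a} - \sqrt{b}| \leq \sqrt{|a - b|}$ used at the end of the proof of Proposition~\ref{prop:4.4} then gives $\|\del_x\Phi_{\omega,c}^L - \del_x\Phi_{\omega,c}\|_{L^\infty([-L, L])} \to 0$, closing the case $m = 1$ and completing~\textup{(\ref{eq:4.39})}.
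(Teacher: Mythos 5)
Your proposal is correct, and for $m=0$ and $m=2$ it coincides with what the paper intends: the base case is Proposition~\ref{prop:4.4}, and the second derivatives are controlled directly through the equation \textup{(\ref{ELL})} together with the uniform bound $\|\Phi_{\omega,c}^L\|_{L^\infty}^2=\eta_3<\alpha_1$. Where you genuinely diverge is the case $m=1$. The paper gives no details (it only says the proposition ``follows from Proposition~\ref{prop:4.4} and similar discussion on the proof of Theorem~\ref{thm:4.2}''), and the natural reading of that hint is to mimic the fundamental-theorem-of-calculus argument of Proposition~\ref{prop:4.4} one derivative up: since $\del_x\Phi_{\omega,c}^L(0)=\del_x\Phi_{\omega,c}(0)=0$, one writes $(\del_x f_L)^2(x)-(\del_x f)^2(x)=2\int_0^x(f_L'f_L''-f'f'')\,dy$, bounds the integral by Cauchy--Schwarz using the $H^2$-convergence of Theorem~\ref{thm:4.2}, and then extracts the derivatives by the same square-root trick. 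Your route instead subtracts the two first integrals $(\del_x\Phi)^2=(\omega-\tfrac{c^2}{4})\Phi^2+\tfrac{c}{4}\Phi^4-\tfrac{1}{16}\Phi^6+2C_\psi$, which is exactly \textup{(\ref{eq:3.1})} rewritten in terms of $\Phi$ (the coefficients check out), uses $C_{\psi,L}=\eta_1\eta_2\eta_3/32\to 0$ via Lemma~\ref{lem:3.4}, and recovers the derivatives from their squares using the common sign of $\del_x\Phi_{\omega,c}^L$ and $\del_x\Phi_{\omega,c}$ on $[0,L]$ (both profiles are even, positive, and decreasing there, and the inequality $|\sqrt{a}-\sqrt{b}|\leq\sqrt{|a-b|}$ applies). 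This is a clean and fully rigorous alternative: it stays entirely in $L^\infty$, never invokes the $H^2$-convergence of Theorem~\ref{thm:4.2}, and exploits the quadrature structure that produced the exact solutions in the first place; the trade-off is that it is specific to this ODE (it needs the conserved quantity and the monotonicity of the profile), whereas the paper's interpolation-style argument is more generic.
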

\subsection{Proof of Theorem \ref{thm:1.2}~ and Theorem \ref{thm:1.3}}
\begin{proof}[Proof of Theorem~\ref{thm:1.3}]
It is proved by differentiating the equation (\ref{ELL}) and by applying Proposition~\ref{prop:4.5} and the induction. We omit the detail.
\end{proof}
\begin{proof}[Proof of Theorem~\ref{thm:1.2}]
It is proved similarly as Theorem~\ref{thm:1.3} by using the induction. We prove only case $m=3$. By differentiating the equation (\ref{ELL}), we have
\begin{align}
\label{eq:4.40}
-\Phi ''' +\l(\omega- \frac{c^2}{4}\r) \Phi '+\frac{3}{2}c\Phi^2 \Phi ' - \frac{15}{16} \Phi^4 \Phi ' =0,
\end{align}
By Proposition~\ref{prop:4.5}, we note that
\begin{align}
\label{eq:4.41}
\sup_{L_0<L<\infty} \| f_L\|_{W^{1,\infty}([-L,L])} \leq C=C(\| f\|_{W^{1,\infty}(\R )}).
\end{align}
By (\ref{eq:4.41}) and Theorem~\ref{thm:4.2}, we have
\begin{align*}
\| f_L^2f_L'-f^2 f'\|_{L^2([-L,L])} &\leq \| f_L'\|_{L^{\infty}([-L,L])} \| f_L^2-f^2\|_{L^2([-L,L])} \\
&\quad +\| f^2\|_{L^{\infty}([-L,L])}\| f_L' -f'\|_{L^2([-L,L])} \\
 &\leq C\| f_L-f\|_{H^1([-L,L])} \underset{L\to\infty}{\longrightarrow} 0.
\end{align*}
Similarly, we have
\begin{align*}
\lim_{L\to \infty}\| f_L^4f_L'-f^4 f'\|_{L^2([-L,L])} =0 .
\end{align*}
By using the equation (\ref{eq:4.40}), we deduce that
\begin{align*}
\lim_{L\to \infty} \| \del^3 \Phi_{\omega ,c}^L- \del^3 \Phi_{\omega ,c} \|_{L^2([-L,L])}^2 =0.
\end{align*}
This completes the proof.
\end{proof}
%%%%%%%%%%%%%%%%%%
%%%%%%%%%%%%%%%%%%
%%%%%%%%%%%%%%%%%%
\appendix
\section{Monotonicity of the modulus} %$k(\eta_3)$
%\section{Monotonicity of the modulus $k$}
\label{sec:A}
Here, we prove Proposition \ref{prop:3.5}. We recall that $k^2$ is a function of $\eta$ as 
\begin{align*}
% \label{eq:3.25}
k^2 &=\frac{3\eta^2 +(\sqrt{f_s(\eta )}-6s)\eta +2(s^2 -1)}{2\eta\sqrt{f_s(\eta )}},
\end{align*}
where $f_s(\eta )$ is defined by
\begin{align*}
f_s(\eta ) :=-3\eta^2 +4s\eta +4
\end{align*}
for $-1<s\leq 1$ and $\beta_0 <\eta <\beta_1$. We also recall that $\beta_0$ and $\beta_1$ are defined by
\begin{align*}
\beta_0 &=\beta_0 (s)=\frac{1}{3}\l( 2s+\sqrt{3+s^2}\r) ,\\
\beta_1 &=\beta_1 (s)=1+s.
\end{align*}
We define the function $b$ by
\begin{align}
\label{eq:A.1}
b=b_s(\eta ):=\eta^2 f_s(\eta )=-3\eta^4 +4s\eta^3 +4\eta^2 .
\end{align}
Note that by Lemma \ref{lem:3.2} $b=b_s(\eta )$ is positive for $-1<s\leq 1$ and $\beta_0 <\eta <\beta_1$. We differentiate $k^2$ with respect to $\eta$ as
% $k^2$の$\eta$による微分を計算しよう．この計算は中々大変である．
\begin{align*}
\frac{dk^2}{d\eta} &=\frac{1}{2b}\l[ \l( 6\eta -6s +\frac{d\sqrt{b}}{d\eta}\r)\sqrt{b}-
\l( 3\eta^2 +\sqrt{b} -6s\eta +2(s^2-1 )\r) \frac{d\sqrt{b}}{d\eta}\r] \\
&=\frac{1}{2b}\l[ 6( \eta -s )\sqrt{b}-
\l( 3\eta^2 -6s\eta +2(s^2-1 )\r) \frac{d\sqrt{b}}{d\eta}\r] .
\end{align*}
A direct computation shows that
\begin{align}
\label{eq:A.2}
\frac{dk^2}{d\eta} 
&=\frac{\eta}{b\sqrt{b}}\bigl( 6s\eta g_s(\eta )+4(1-s^2) \bigr) ,
\end{align}
where the function $g_s$ is defined by
\begin{align*}
g_s(\eta )&:=-\l( \eta -(s-1)\r) \l( \eta -(s+1)\r) .
\end{align*}
We note that a positive zero of $g_s(\eta )$ is given by $\beta_1=s+1$. Since $g_s(0)=1-s^2 \geq 0$ for $-1<s\leq 1$, we have $g_s(\eta) >0$ for $\beta_0 <\eta <\beta_1$.
% 従って，$0<s\leq 1$のとき
Therefore, if $0< s\leq 1$, by (\ref{eq:A.2}) we obtain
\begin{align*}
\frac{dk^2}{d\eta}
&\geq \frac{6s\eta g_s(\eta )}{b\sqrt{b}}>0.
% \text{for}~0<s\leq 1.
\end{align*}
If $s=0$, by (\ref{eq:A.2}) we obtain
\begin{align*}
\frac{dk^2}{d\eta}=\frac{4\eta}{b\sqrt{b}}>0.
\end{align*}
% for $\beta_0 <\eta <\beta_1$. 
Finally, we consider the case $-1<s<0$. We note that $\beta_0$ gives a positive maximal point of $\eta\mapsto \eta g_s(\eta)$. Therefore, if $-1<s<0$, we have
\begin{align}
\label{eq:A.3}
6s\eta g_s(\eta )+4(1-s^2)
&>6s\beta_0 g_s(\beta_0 )+4(1-s^2)=:h(s).
\end{align}
From the definitions of $\beta_0$ and $g_s$, $h(s)$ is rewritten as
\begin{align}
\label{eq:A.4}
h(s)&=\frac{4}{9}\l( -s^4+(3+s^2)^{3/2}s+9\r) .
\end{align}
We note that $h(0)=4$, $h(-1)=0$, and $s\mapsto h(s)$ is strictly increasing on the interval $[-1,0]$. Hence, from (\ref{eq:A.2}) and (\ref{eq:A.3}), we deduce that
\begin{align*}
\frac{dk^2}{d\eta}&=\frac{\eta}{b\sqrt{b}}\bigl( 6s\eta g_s(\eta )+4(1-s^2) \bigr)  \\
&>\frac{\eta}{b\sqrt{b}}\cdot h(s)>\frac{\eta}{b\sqrt{b}}\cdot h(-1)=0.
\end{align*}
% for $-1<s<0$ and $\beta_0 <\eta <\beta_1$. 
This completes the proof of Proposition \ref{prop:3.5}.
%%%%%%%%%%%%%%%%%%
%%%%%%%%%%%%%%%%%%
%%%%%%%%%%%%%%%%%%
\section{Monotonicity of the fundamental period}%$T_{\psi}(\eta_3)$
\label{sec:B}
Here, we prove Proposition \ref{prop:3.7}. We recall that the fundamental period $T_{\psi}$ is defined by
\begin{align*}
T_{\psi}=\frac{8}{\sqrt{\eta_3 (\eta_2 -\eta_1 )}} K(k) .
\end{align*}
We note that 
\begin{align*}
\eta_3 (\eta_2 -\eta_1 ) =16\omega\eta \sqrt{f_s(\eta )}=16\omega \sqrt{b_s(\eta )},
\end{align*}
where $b_s(\eta )$ is defined by (\ref{eq:A.1}). Hence, $T_{\psi}$ is rewritten as
\begin{align}
\label{eq:B.1}
T_{\psi}  =\frac{2}{\sqrt{\omega}b_s(\eta )^{1/4}}K(k (\eta )),
\end{align}
where $-1<s\leq 1$ and $\beta_0 <\eta <\beta_1$. We differentiate $T_{\psi}$ with respect to $\eta$ as 
\begin{align}
\label{eq:B.2}
\frac{ \sqrt{\omega} }{2}\cdot\frac{dT_{\psi}}{d\eta}&=\frac{dK}{dk}\frac{dk}{d\eta}\cdot\frac{1}{b_s^{1/4}}+
K\cdot\l( -\frac{1}{4b_s^{5/4}}\r) \frac{db_s}{d\eta} \\
&=\frac{1}{b_s^{5/4}}\l( \frac{dK}{dk}\frac{dk}{d\eta}\cdot b_s -K\eta a_s(\eta )\r) ,
\notag
\end{align}
where the function $a_s$ is defined by
\begin{align}
\label{eq:B.3}
a_s(\eta ) :=-3\eta^2 +3s\eta +2.
\end{align}
We note that 
\begin{align}
\label{eq:B.4}
\gamma =\gamma (s) :=\frac{3s+\sqrt{9s^2+24}}{6}
\end{align}
gives a positive zero of $a_s(\eta )$. Since $a_s(0)=2$, we have $a_s (\eta) >0$ for $0<\eta <\gamma$. 
When $-1<s\leq 1$, we have
\begin{align*}
\gamma \leq \beta_1 &\iff  \frac{3s+\sqrt{9s^2+24}}{6}\leq 1+s \\
&\iff -\frac{1}{3}\leq s.
\end{align*}
On the other hand, we have
\begin{align*}
\beta_0 <\gamma &\iff \frac{1}{3}\l( 2s+\sqrt{s^2+3}\r) <\frac{1}{6}\l( 3s+\sqrt{9s^2+24}\r)\\
&\iff s<\sqrt{s^2+3} .
\end{align*}
Since the last inequality holds for any $s\in\R$, we have $\beta_0 <\gamma$ for $-1<s\leq 1$. Hence, the following three cases can be considered.
\begin{enumerate}[(a)]
\setlength{\itemsep}{3pt}
\item $\ds -\frac{1}{3} <s\leq 1$, $\gamma\leq \eta <\beta_1$.

\item $\ds -\frac{1}{3} <s\leq 1$, $\beta_0 <\eta <\gamma$.

\item  $\ds -1<s \leq -\frac{1}{3}$, $\beta_0 <\eta <\beta_1(\leq \gamma)$.
\end{enumerate}
Since $\ds \frac{dK}{dk}>0$, and $\ds \frac{dk}{d\eta} >0$ from Appendix \ref{sec:A}, we note that the first term on the RHS of (\ref{eq:B.2}) is positive. In the case (a), since $a(\eta) \leq 0$, by (\ref{eq:B.2}) we deduce that
\begin{align}
\label{eq:B.5}
\frac{dT_{\psi}}{d\eta} >0 .
\end{align}
In the latter two cases, since $a_s (\eta ) >0$, we need to calculate a little more carefully. But, by using the formula
\begin{align*}
\frac{dK}{dk}=\frac{1}{kk'^2}(E-k'^2K)
\end{align*}
and (\ref{eq:A.2}), one can prove that (\ref{eq:B.5}) holds in these cases. We omit the detail and refer to \cite{A07, AN09} as similar arguments.
%%%%%%%%%%%%%%%%%%
%%%%%%%%%%%%%%%%%%
\section*{Acknowledgments}
The author would like to thank Tohru Ozawa for his advice and encouragement. He is also grateful to Masahito Ohta for many useful discussions on topics closely related to this work, especially on stability of solitons. This work was completed during the author's stay at Instituto de Matem\'{a}tica Pura e Aplicada (IMPA). The author is grateful to Felipe Linares for his hospitality. 
%The author learned the importance of solitons for the massless case from his previous work \cite{FHI17}. 
The author also thanks Noriyoshi Fukaya and Takahisa Inui for helpful discussions, and the anonymous referees for their helpful comments on the references. This work was supported by Grant-in-Aid for JSPS Fellows 17J05828 and Top Global University Project, Waseda University.
%学振JSPS thanks かacknowledgement に書く。

%%%%%%%%%%%%%%%%%%%%%%%%%%%%%%%%%%%%%%%
%%%%%%%%%%%%%%%%%%%%%%%%%%%%%%%%%%%%%%%

\end{document}